\documentclass{article}

% if you need to pass options to natbib, use, e.g.:
%     \PassOptionsToPackage{numbers, compress}{natbib}
% before loading neurips_2022

\PassOptionsToPackage{numbers,square}{natbib}
% ready for submission
\usepackage[final]{neurips_2022}

\usepackage{tabularx, enumerate,soul}
\usepackage[dvipsnames]{xcolor}
\usepackage{amsmath,amssymb,dsfont,subcaption,bigints}
\usepackage{mathtools, graphicx,color,dsfont,amsthm,bbm}
\usepackage{lipsum,bm,comment,booktabs}
\usepackage[numbers]{natbib}
\usepackage{longtable}
\usepackage{array}
\usepackage{caption}
\usepackage[colorlinks=true,linkcolor=blue,anchorcolor=blue,citecolor=blue,filecolor=black,menucolor=black,runcolor=black,urlcolor=black]{hyperref}
\usepackage{cleveref}

\usepackage{algorithm}% http://ctan.org/pkg/algorithms
\usepackage{algpseudocode}% http://ctan.org/pkg/algorithmicx
\algnewcommand\algorithmicinput{\textbf{INPUT:}}
\algnewcommand\INPUT{\item[\algorithmicinput]}
\algnewcommand\algorithmicoutput{\textbf{OUTPUT:}}
\algnewcommand\OUTPUT{\item[\algorithmicoutput]}

\newcommand{\RN}[1]{%
  (\textup{\uppercase\expandafter{\romannumeral#1}})%
}
\newcolumntype{H}{>{\setbox0=\hbox\bgroup}c<{\egroup}@{}}
\newcommand{\E}{\mathbb{E}}
\newcommand{\mP}{\mathbb{P}}
%\setcitestyle{citesep = {;}}

\theoremstyle{plain}
\newtheorem{thm}{Theorem}
\newtheorem{lemma}[thm]{Lemma}
\newtheorem{asp}{Assumption}

\newtheorem{defn}{Definition}

\theoremstyle{definition}

\allowdisplaybreaks

% to compile a preprint version, e.g., for submission to arXiv, add add the
% [preprint] option:
%     \usepackage[preprint]{neurips_2022}

% to compile a camera-ready version, add the [final] option, e.g.:
%     \usepackage[final]{neurips_2022}

% to avoid loading the natbib package, add option nonatbib:
%    \usepackage[nonatbib]{neurips_2022}

\usepackage[utf8]{inputenc} % allow utf-8 input
\usepackage[T1]{fontenc}    % use 8-bit T1 fonts
\usepackage{hyperref}       % hyperlinks
\usepackage{url}            % simple URL typesetting
\usepackage{booktabs}       % professional-quality tables
\usepackage{amsfonts}       % blackboard math symbols
\usepackage{nicefrac}       % compact symbols for 1/2, etc.
\usepackage{microtype}      % microtypography
\usepackage{xcolor}         % colors

\title{Network change point localisation under local differential privacy}

\author{
  Mengchu Li \\
  Department of Statistics\\
  University of Warwick\\
  \texttt{mengchu.li@warwick.ac.uk} \\
  % examples of more authors
  \And 
  Thomas B.~Berrett \\
  Department of Statistics \\
  University of Warwick \\
  \texttt{tom.berrett@warwick.ac.uk} \\
  \AND 
  Yi Yu \\
  Department of Statistics \\
  University of Warwick \\
  \texttt{yi.yu.2@warwick.ac.uk} \\
}

\begin{document}

\maketitle

\begin{abstract}
Network data are ubiquitous in our daily life, containing rich but often sensitive information. In this paper, we expand the current static analysis of privatised networks to a dynamic framework by considering a sequence of networks with potential change points. We investigate the fundamental limits in consistently localising change points under both node and edge privacy constraints, demonstrating interesting phase transition in terms of the signal-to-noise ratio condition, accompanied by polynomial-time algorithms. The private signal-to-noise ratio conditions quantify the costs of the privacy for change point localisation problems and exhibit a different scaling in the sparsity parameter compared to the non-private counterparts. Our algorithms are shown to be optimal under the edge LDP constraint up to log factors. Under node LDP constraint, a gap exists between our upper bound and lower bound and we leave it as an interesting open problem. 

\end{abstract}

\section{Introduction}\label{sec: introduction}

Numerous application areas and everyday life routinely generate network data, which contain valuable but often sensitive information \cite[e.g.][]{potterat2002risk, onnela2007structure}.  Understanding the underlying patterns of network data while preserving individuals' privacy is crucial in modern data analysis.  Several attempts have been made, but mostly focus on studying single snapshots of networks (a.k.a.~static networks) and/or subject to central differential privacy constraint, where a central data curator is allowed to handle raw information from all individuals \citep[e.g.][]{karwa2014private, karwa2016inference, kasiviswanathan2013analyzing, chang2021edge, mohamed2022differentially}. In this paper, we are instead concerned with understanding the dynamics of a sequence of networks (a.k.a.~dynamic networks), under local privacy constraints (LDP), where no one is allowed to handle the raw data of other individuals \citep[e.g.][]{duchi2018minimax, erlingsson2014rappor, lin2022towards, wei2020asgldp, qin2017generating}.  

Dynamic networks are usually in the form of a sequence of static networks, along a linear ordering, say time.  In the dynamic networks studies, it is vital to capture the ever changing nature.  A handy and useful way to model the changes is to assume that there exists a sequence of unknown time points, where the underlying distributions change abruptly \citep[e.g.][]{tao2019statistical, wang2021optimal}.  These unknown time points are referred to as change points.  Identifying change points helps to pinpoint important events, and more accurately estimate underlying distributions, which can be regarded as stationary between two consecutive change points.  Dynamic networks change point analysis has demonstrated its success in climatology \citep[e.g.][]{malik2012analysis}, crime science\citep[e.g.][]{bright2013evolution} and neuroscience \citep[e.g.][]{cribben2017estimating,mancho2020network}, to name but a few.

Despite the growing popularity in studying dynamic networks, we have witnessed a vacuum in estimating change points while preserving data owners' privacy.  Having said this, a line of attack has been made to analyse static network data under LDP constraints, where only data owners have access to their individual raw data \citep[e.g.][]{qin2017generating, wei2020asgldp}. The private analysis of network data is complicated by the fact that different LDP conditions are required depending on the information that one wants to protect. For example, in a relationship network, users may want to protect their edge information, i.e. whether they are connected to someone else or not. As we argue in \Cref{sec: edge LDP}, formalisation to protect such information should require minimal trust between users due to the symmetric nature of network data.  In a recommending system network, users may want to protect their entire connection portfolio, representing the purchase status of a user over a collection of products.  In a brain imaging network, a patient may even prefer protecting the entire network from adversarial inference attacks.

In view of the aforementioned state of the art, we list the contributions of this paper below.

$\bullet$ To our best knowledge, this is the first time investigating change point localisation in dynamic networks, under LDP constraints.  We consider two dynamic network models, where a sequence of sensitive networks are generated from (a) inhomogeneous Bernoulli networks (IBN) (\Cref{def-ibn}) and (b) bipartite networks with possibly dependent Bernoulli entries (\Cref{def-bibn}).   Multiple change points of the raw network distributions are allowed.  To tailor to the network models, we consider two forms of privacy requirements - edge LDP and node LDP (See \Cref{sec: privacy defn}).

$\bullet$ For dynamic IBNs under edge LDP, we show a phase transition in terms of the signal-to-noise ratio, partitioning the whole parameter space into two parts: (1) the infeasibility regime where no algorithm is expected to provide consistent change point estimators and (2) the regime where a computationally-efficient algorithm is shown to output consistent estimators.  The importance of this phase transition is twofold: (1) The transition boundary is different from its counterpart in the non-private case \citep{wang2021optimal}, quantifying the cost of preserving edge privacy in localising change points.  (2) We show that a simple non-interactive randomised response \cite{warner1965randomized} based privacy mechanism is minimax rate-optimal (up to log factors) for the purpose of change point localisation among all sequentially interactive mechanisms. 

$\bullet$ For bipartite networks under node LDP, we derive an infeasibility regime which is different from that under the edge LDP.  This fundamental difference quantifies the difference between these two different LDP constraints, and can be used to help practitioners designing data collection mechanisms.  We adopt a privacy mechanism proposed in \citep{duchi2018minimax}, together with a change point estimation routine, providing a consistent change point estimator.  Supported by a minimax lower bound result, our estimator is shown to be minimax rate optimal when the number of columns is of constant order.  When the number of columns is allowed to diverge, a gap between our lower and upper bounds exists.  This echos the well-identified challenges in the high-dimensional privacy research \cite[e.g.][]{duchi2018minimax,duchi2019lower}.  We contribute a high-dimensional network example along with in-depth discussions.   

\noindent \textbf{Notation}  For any matrix $A \in \mathbb{R}^{M\times N}$, let $A_{ij}$ be the $(i,j)$-th entry of $A$, $A_i \in \mathbb{R}^N$ be the $i$-th row of $A$, $A^{\top}$ be the transpose of $A$, $\|A\|_{\infty} = \max_{1\leq i \leq M, 1\leq j \leq N}|A_{ij}|$ and $\|A\|$ denote the operator norm of $A$.  For any matrix $B \in \mathbb{R}^{M\times N}$, let $(A,B) = \sum_{1\leq i \leq M, 1\leq j \leq N }A_{ij}B_{ij}$ and $\|A\|_{\mathrm{F}} = \sqrt{(A,A)}$ be the Frobenius norm of $A$. For any vector $v \in \mathbb{R}^p$, let $\|v\|_1, \|v\|_2, \|v\|_\infty$ be the $\ell_1$, $\ell_2$ and $\ell_\infty$ vector norms respectively. For any set $S$, let $|S|$ be the cardinality of $S$.  Let $S = S'\sqcup S''$, if $S' \cup S'' = S$ and $S' \cap S'' = \emptyset$.  Let $1\{\cdot\}$ be the indicator function only taking values in $\{0, 1\}$. For two any functions of $T$, say $f(T)$ and $g(T)$, we write $f(T) \gtrsim g(T)$ if there exists constants $C>0$ and $T_0$ such that $f(T) \geq Cg(T)$ for any $T \geq T_0$, and write $f(T) \asymp g(T)$ if $f(T) \gtrsim g(T)$ and $f(T) \lesssim g(T)$. 

\subsection{Problem setup}\label{sec-problem-setup}

We consider two parallel models of dynamic networks.  The first one is built upon an IBN model, which covers a wide range of models for undirected networks, including the Erd\H{o}s--R\'enyi random graph \citep{renyi1959random}, the stochastic block model \citep{holland1983stochastic} and the random dot product graph \citep[e.g.][]{athreya2017statistical}, among others.  

\begin{defn}[Inhomogeneous Bernoulli network, IBN] \label{def-ibn}
A network with node set $\{1,\dotsc,n\}$ is an inhomogeneous Bernoulli network if its adjacency matrix $A \in \mathbb{R}^{n \times n}$ satisfies that $A_{ij} = A_{ji} = 1\{\mbox{nodes }i, j \mbox{ are connected by an edge}\}$ and $\{A_{ij},i\leq j\}$ are independent Bernoulli random variables with $\E(A_{ij}) = \Theta_{ij}$.
\end{defn}

The second model considered is a bipartite IBN with possibly correlated entries within each row of the biadjacency matrix.  See \cite{asratian1998bipartite} for more discussions on bipartite networks.

\begin{defn}[Bipartite IBN] \label{def-bibn}
A network with node set $\{1,\dotsc, n_1 + n_2\} = V_1 \sqcup V_2$, $|V_1| = n_1$ and $|V_2| = n_2$, is a bipartite IBN if its biadjacency matrix $A \in \mathbb{R}^{n_1 \times n_2}$ satisfies the following. (1) For $i \in V_1$ and $j \in V_2$, $A_{ij} = 1\{\mbox{nodes }i, j \mbox{ are connected
}\}$.  (2) For any $i_1, i_2 \in V_1$, $i_1 \neq i_2$, $\{A_{i_1, j}, \, j = 1, \ldots, n_2\}$ and $\{A_{i_2, j}, \, j = 1, \ldots, n_2\}$ are independent.  (3) For any $i \in V_1$, $j \in V_2$, $A_{ij}$ is a Bernoulli random variable with $\E(A_{ij}) = \Theta_{ij}$.
\end{defn}

Bipartite IBNs are often used in the recommending system, where each $i \in V_1$ represents a user and each $j \in V_2$ represents a product \citep[e.g.][]{liu2009personal,ju2014personal}.  An important difference between Definitions~\ref{def-ibn} and \ref{def-bibn} is that, in \Cref{def-ibn} all entries are assumed to be independent, while in \Cref{def-bibn}, entries within the same row are allowed to be arbitrarily dependent.  Dependence in networks are common in practice, for example the control-flow graph considered in \cite{zhang2020differentially} where $V_1$ corresponds to the set of users and each node in $V_2$ corresponds to a component within some software application and the dependencies therein are due to the causality between nodes.  

The change points are defined formally in \Cref{asp:model-bi} where the magnitude of the distributional change is measured by the normalised Frobenius norm. The choice of Frobenius norm captures both dense and sparse changes in the network structure, see \cite{wang2021optimal}.

\begin{asp} \label{asp:model-bi}
Let $\{A(t)\}_{t = 1}^T \subset \{0, 1\}^{n_1 \times n_2}$ be an independent sequence of adjacency matrices of IBNs defined in \Cref{def-ibn} (in which case $n_1 = n_2 = n$) or biadjacency matrices of bipartite IBNs defined in \Cref{def-bibn}, with $\mathbb{E}\{A(t)\} = \Theta(t)$.  Assume that there exist $\{\eta_1, \dotsc,\eta_K\} \subset\{2, \ldots, T\}$, with $1 = \eta_0 < \eta_1 < \dotsc<\eta_K\leq T<\eta_{K+1} = T+1$, such that $\Theta(t) \neq \Theta(t-1)$, if and only if $t \in \{\eta_1,\dotsc,\eta_K\}$.

Let $\Delta = \min_{k = 1}^{K+1}(\eta_k-\eta_{k-1})$ be the minimal spacing and $\kappa_0 = \min_{k = 1}^K \|\Theta(\eta_k)-\Theta(\eta_{k}-1)\|_\mathrm{F}/(\sqrt{n_1n_2} \rho)$ be the minimal jump size, where $\rho = \max_{t = 1}^T \|\Theta(t)\|_\infty$ denotes the entry-wise sparsity.
\end{asp}

For both models, under privacy constraints to be discussed in \Cref{sec: privacy defn}, our goal is to construct consistent estimators $\{\widehat{\eta}_k\}_{k = 1}^{\widehat{K}}$ of $\{\eta_k\}_{k = 1}^K$.  To be specific, $\{\widehat{\eta}_k\}_{k = 1}^{\widehat{K}}$ is said to be consistent if $\Delta^{-1}\max_{k = 1}^K |\widehat{\eta}_k - \eta_k| \to 0$ and $\widehat{K} = K$ holds with probability tending to 1, as the sample size~$T$ grows unbounded.

Lastly, we note that in statistical network analysis, when allowing for entry-wise sparsity, it is usually assumed that $\rho \geq \log(n)/n$ \citep[e.g.][]{wang2021optimal} to ensure there are sufficiently many observed edges. However, We do not impose lower bounds on $\rho$ in \Cref{asp:model-bi}, since to preserve privacy, the expectations of privatised network entries are inflated by a factor of the privacy level $\alpha \in (0, 1)$.  Let $\rho'$ be the sparsity parameter of the privatised networks.  Such inflation automatically ensures that $\rho' \geq \log(n)/n$, for any $\rho \in [0, 1]$ and $n>1$ (See the proof of \Cref{prop: edge upperbound}).

\section{Network local differential privacy} \label{sec: privacy defn}

To formalise different network LDP notions, we first recall a general definition of LDP.  A private mechanism is a conditional distribution, which conditional on raw data, outputs privatised data.  For a pre-specified privacy level $\alpha \geq 0$, a random object $Z_i$ taking values in $\mathcal{Z}$ is an $\alpha$-LDP version of the raw data $X_i$, if for any raw data $x$ and $x'$, any measurable set $S \subset \mathcal{Z}$, it holds that
\begin{equation}\label{eq-LDP-def}
    \frac{Q_i(Z_i \in S | X_i = x, Z_1 = z_1,\dotsc,Z_{i-1} = z_{i-1})}{Q_i(Z_i \in S | X_i = x', Z_1 = z_1,\dotsc,Z_{i-1} = z_{i-1})} \leq e^{\alpha}.
\end{equation}
Mechanisms $Q_i$'s satisfying \eqref{eq-LDP-def} are called sequentially interactive \cite{duchi2018minimax}. A privacy mechanism is $\alpha$-LDP if all output $Z_i$'s are $\alpha$-LDP.  We focus on the regime $\alpha \in (0, 1)$, where the effect of privacy is the strongest and is often the regime of primary interest \cite[e.g.][]{duchi2013local,duchi2018minimax,berrett2021changepoint,rohde2020geometrizing}. 
  
In view of \eqref{eq-LDP-def}, the LDP constraint ensures that each individual $i$ only has access to their own raw data.  As for network data, to impose LDP, it is crucial to formalise what a unit of information includes and who are the owners of each unit of information.  In the rest of this section, we consider two cases arising from different application backgrounds.

\subsection{Edge local differential privacy in inhomogeneous Bernoulli networks}\label{sec: edge LDP}

In epidemiological studies on sexually transmitted diseases, network data are formed by edges linking sexual partners \citep[e.g.][]{rocha2010information}.  A natural choice of information unit is the existence of sexual relationship among subjects.  Due to the sensitivity of such data, one may wish to consider all parties involved to be the owners of a potential link.  Inspired by such applications, we formalise the edge LDP in \Cref{def:edge-ldp}.

 Let $a_{1:n,1:n}(1:T) = \{a_{ij}(t): 1 \leq i \leq j \leq n, 1 \leq t \leq T\}$ be the upper triangular parts of a sequence of observed adjacency matrices. We consider (sequentially) interactive mechanisms where each edge $Z_{ij}(t)$ is allowed to depend on previous private information, i.e. 
 \[
 Z_{ij}(t) | A_{ij}(t) = a, \rightarrow Z_{ij}(t) \sim Q_{ij}^{(t)}\big(\cdot | a , \rightarrow Z_{ij}(t)\big),
 \]
where the notation $\rightarrow Z_{ij}(t) = \big(Z_{1:n,1:n}(< t), Z_{1:(j-1),1:(j-1)}(t), Z_{1:(i-1),j}(t)\big)$ contains all `previous' private information. Note that, without loss of generality, we have fixed an order of interaction in above. That is at each time point $t$, the sequence of privatisation is 
\begin{equation}\label{eq:orderinteraction}
    \begin{cases}
    a_{i-1,j}(t) \rightarrow a_{i,j}(t) \;&\text{when}\;  i < j \\  a_{ij}(t) \rightarrow a_{1,j+1}(t) \;&\text{when}\; i = j.
    \end{cases}
\end{equation}

\begin{defn}[Edge $\alpha$-LDP]\label{def:edge-ldp}
 We say that the privacy mechanism $Q$ defined as 
 \[
 Q(S|a_{1:n,1:n}(1:T)) = \int_{z_{1:n,1:n}(1:T) \in S} \prod_{t= 1}^T \prod_{i = 1}^j \prod_{j = 1}^n d Q_{ij}^{(t)}(z_{ij}{(t)} | a_{ij}{(t)} , \rightarrow z_{ij}(t) )
 \]
is edge \(\alpha\)-LDP, if for any integer $1 \leq t \leq T$, any integer pair $1 \le i \le j \le n$, any measurable set $S \subset \mathcal{Z}$ and any $a_{ij}{(t)}, a_{ij}'{(t)} \in \{0, 1\}$, it holds that
\begin{equation} \label{eq:edgeldp}
    \frac{Q_{ij}^{(t)}(Z_{ij}(t) \in S | a_{ij}{(t)} , \rightarrow z_{ij}(t))}{Q_{ij}^{(t)}(Z_{ij}(t) \in S | a_{ij}'{(t)} , \rightarrow z_{ij}(t))} \leq e^{\alpha}.
\end{equation}
\end{defn}

\Cref{def:edge-ldp} allows sequential interactive mechanisms which is more general than existing edge LDP notions \citep[e.g.][]{qin2017generating,wei2020asgldp} where only non-interactive mechanisms are considered. In addition, existing definitions \citep[e.g.][]{qin2017generating,wei2020asgldp} require that for any $i \in \{1, \dotsc, n\}$ and any $x, x' \in \mathbb{R}^{n}$ with $\|x-x'\|_1 = 1$,
\begin{equation}\label{eq: edgeldpusual}
    Q_i^{(t)}(Z_i{(t)} \in S | A_{i}{(t)} = x)/Q_i^{(t)} (Z_{i}{(t)} \in S | A_{i}{(t)} = x') \le e^{\alpha}.
\end{equation}
Mechanisms satisfying \eqref{eq: edgeldpusual} requires trust between nodes. If a node does not follow the protocol correctly, or their data are intercepted, they may reveal information on other nodes in the network.  This is not the case with LDP mechanisms in other settings, where the privacy of an individual is guaranteed regardless of the behaviour of other individuals. Our definition~\eqref{eq:edgeldp} does not suffer from this since to privatise each edge between two nodes, \eqref{eq:edgeldp} implicitly requires that both parties to agree on their status and the privatised result so that the trust issue can be prevented.

\subsection{Node local differential privacy in bipartite inhomogeneous Bernoulli networks}\label{sec-node-ldp-def}

In a Netflix data set, one may model the viewing history by a dynamic bipartite IBN, where each row represents a user, each column represents a movie and each snapshot of network gathers the viewing information within a short time frame.  It is reasonable to consider an information unit to be the viewing history of a user within a time frame, which is a row in a biadjacency matrix.  Inspired by such applications, we formalise the bipartite node LDP in \Cref{def:node-ldp}.

Let $a_{1:n_1}(1:T) = \{a_{i}(t): 1 \leq i \leq n_1, 1 \leq t \leq T\}$, where $a_{i}(t)$ is the $i$-th row of the observed biadjacency matrix at time $t$. Similar to the edge LDP case, we consider (sequentially) interactive mechanisms where each row $Z_{i}(t)$ is allowed to depend on previous private information. i.e.
\[
 Z_{i}(t) | A_{i}(t) = a, \rightarrow Z_{ij}(t) \sim Q_{i}^{(t)}\big(\cdot | a , \rightarrow Z_{i}(t)\big),
\]
where the notation $\rightarrow Z_{i}(t) = \big(Z_{1:n_1}(< t), Z_{<i}(t)\big)$ contains all `previous' private information. Without loss of generality, we have fixed an order of interaction, i.e. at each time point $t$, the sequence of privatisation is $a_{i-1}(t) \rightarrow a_{i}(t)$, for $i = 2,\dotsc,n_1$.

\begin{defn}[Bipartite node $\alpha$-LDP]\label{def:node-ldp}
We say that the privacy mechanism $Q$ defined as
\[
Q(S|a_{1:n}(1:T)) = \int_{z_{1:n}(1:T) \in S} \prod_{t= 1}^T \prod_{i= 1}^n d Q_{i}^{(t)}(z_{i}{(t)} | a_{i}{(t)} , \rightarrow z_{i}(t))
\]
is bipartite node \(\alpha\)-LDP, if for any integer $1 \leq t \leq T$, any integer $1 \leq i \leq n_1$, any measurable set $S \subset \mathcal{Z}$ and any $a_{i}(t),a_i'(t) \in \{0, 1\}^{n_2}$, it holds that 
\begin{equation} \label{eq:nodeldp}
\frac{Q_i^{(t)}(Z_i{(t)} \in S| a_i(t), \rightarrow z_{i}(t) )}{Q_i^{(t)} (Z_i{(t)} \in S| a_i'(t), \rightarrow z_{i}(t))} \leq e^{\alpha}.
\end{equation}
\end{defn}

Different notions of node LDP have been studied in the literature. Our definition \eqref{eq:nodeldp} is consistent with 
\citep[e.g.][]{qin2017generating, wei2020asgldp} while some adopt the definition inherited from central DP allowing the neighbouring networks to have different dimension by either inclusion and deletion of one node \cite{kasiviswanathan2013analyzing,day2016publishing}. Several works consider the same constraint as \eqref{eq:nodeldp} under the name user-level LDP \citep[e.g.][]{levy2021learning, zhou2021locally} for different learning tasks. 

One appealing feature of bipartite graphs when considering node LDP is that the neighbouring data sets $x,x'$ can be protected independently for each node in $V_1$, whereas in a general graph, node LDP should account for the intrinsic symmetry of the adjacency matrix when defining neighbouring data sets \cite{imola2021locally}. Comparing the two LDP definitions we considered in this section, we see that in \Cref{def:edge-ldp} level $\alpha$ privacy is imposed to protect one edge, and in \Cref{def:node-ldp} level $\alpha$ privacy is imposed to protect $n_2$ edges. For the same privacy parameter $\alpha$, node privacy is a much more stringent constraint than edge privacy \citep[e.g.][]{qin2017generating,wei2020asgldp,imola2021locally}. 

\section{Fundamental limits in consistent change point localisation} \label{sec:lowerbound}

Recall that our task is to understand how the underlying distributions of dynamic networks change, especially to provide consistent change point estimators defined in \Cref{sec-problem-setup}, under certain form of LDP constraints.  Without the concern of privacy, dynamic IBN change point localisation is investigated in \cite{wang2021optimal}, where a scaling (namely the signal-to-noise ratio) is proposed to partition the whole parameter space into two regimes: a low signal-to-noise ratio regime (infeasibility regime) where no consistent estimator is guaranteed in a minimax sense, and a high signal-to-noise ratio regime where computationally-efficient algorithms are shown to produce consistent estimators.  Recall the model parameters $\kappa_0$ the minimal jump size, $\rho$ the entry-wise sparsity of networks, $n$ the network size and $\Delta$ the minimal spacing.  Without the presence of privacy constraints, the infeasibility regime \cite{wang2021optimal} is
\begin{equation}\label{eq-inf-reg-no}
    \kappa_0^2 \rho n \Delta \lesssim 1,
\end{equation}
which will serve as the benchmark for us to quantify the cost of privacy.

\textbf{The first model} we study is a dynamic IBN model (\Cref{def-ibn} and \Cref{asp:model-bi}), which is identical to the one studied in \cite{wang2021optimal}.  \Cref{lemma: edgelowerbound} demonstrates an infeasiblity regime of localising change points in such a model under the edge $\alpha$-LDP defined in \Cref{def:edge-ldp}.

\begin{lemma}[Edge $\alpha$-LDP]\label{lemma: edgelowerbound}
Let $\{A(t)\}_{t=1}^T \subset \{0, 1\}^{n \times n}$ be a sequence of adjacency matrices satisfying \Cref{asp:model-bi} with $K = 1$ and let $P^T_{\kappa_0,\Delta,n,\rho}$ denote their joint distribution.  Consider the class of distributions 
\[
    \mathcal{P} = \{P^T_{\kappa, \Delta, n, \rho}: \, \kappa_0^2  \leq \min \{ [68n\rho^2\Delta(e^\alpha-1)^2]^{-1},\,  1/4\},\, \Delta \leq T/3\}.
\]
Let $\mathcal{Q}_\alpha^{\mathrm{edge}}$ denote the set of all privacy mechanisms that satisfy the edge $\alpha$-LDP constraint in \Cref{def:edge-ldp}, for $\alpha \in (0, \min\{1,(2\rho)^{-1}\})$. We have that $$\inf_{Q \in \mathcal{Q}^{\mathrm{edge}}_\alpha} \inf_{\hat{\eta}} \sup_{P \in \mathcal{P}} \mathbb{E}_{P,Q}|\widehat{\eta}-\eta(P)| \geq \Delta/12,$$ where $\eta(P)$ denotes the change point location specified by distribution $P$, the first infimum is taken over all possible privacy mechanisms, the second infimum is taken over all measurable functions of the privatised data and the supremum is taken over all raw data's distributions in the class $\mathcal{P}$.
\end{lemma}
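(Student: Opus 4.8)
The plan is to prove this as a minimax lower bound by reducing change point localisation to a hypothesis test and then exploiting the fact that $\alpha$-LDP channels contract information. The backbone is Le Cam's two-point method: it suffices to construct two raw-data laws $P_0,P_1\in\mathcal P$ whose unique change points $\eta_0=\eta(P_0)$ and $\eta_1=\eta(P_1)$ satisfy $|\eta_0-\eta_1|\ge \Delta/3$, and to show that for \emph{every} $Q\in\mathcal Q_\alpha^{\mathrm{edge}}$ the induced laws $M_b:=QP_b$ on the privatised data obey $\|M_0-M_1\|_{\mathrm{TV}}\le 1/2$. Indeed, for any estimator $\widehat\eta$ measurable with respect to the privatised data, the elementary bound $|\widehat\eta-\eta_0|+|\widehat\eta-\eta_1|\ge|\eta_0-\eta_1|$ integrated against $\min(dM_0,dM_1)$ gives $\max_{b}\mathbb E_{P_b,Q}|\widehat\eta-\eta_b|\ge \tfrac12|\eta_0-\eta_1|(1-\|M_0-M_1\|_{\mathrm{TV}})\ge \Delta/12$, and taking $\inf_Q\inf_{\widehat\eta}\sup_{P\in\mathcal P}$ yields the claim.

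For the construction I would place, under $P_0$, the single change point at $\eta_0=\Delta+1$, and under $P_1$ at $\eta_1=\Delta+1+\lfloor\Delta/3\rfloor$ (both admissible since $\Delta\le T/3$, and the minimal spacing in each model is $\ge\Delta$, equal to $\Delta$ under $P_0$). The two models share a common pre-change mean on $\{1,\dots,\Delta\}$, namely $\Theta^{(1)}$ with all entries equal to $\rho/2$, and a common post-change mean $\Theta^{(2)}$ on $\{\eta_1,\dots,T\}$, obtained from $\Theta^{(1)}$ by perturbing a chosen set of entries by amounts of absolute value at most $\rho/2$ so that $\|\Theta^{(2)}-\Theta^{(1)}\|_{\mathrm F}=\kappa_0\sqrt{n_1n_2}\,\rho$; this is feasible precisely because $\kappa_0^2\le 1/4$, and it keeps all entries in $[0,\rho]$ so that $\rho$ remains the entry-wise sparsity and, with $\alpha<(2\rho)^{-1}$, the randomised-response-type privatisation stays well defined. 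On the window $\{\Delta+1,\dots,\eta_1-1\}$ of length $\lfloor\Delta/3\rfloor$, $P_0$ uses mean $\Theta^{(2)}$ and $P_1$ uses mean $\Theta^{(1)}$; elsewhere the two raw laws agree. Both laws then lie in $\mathcal P$ with normalised jump exactly $\kappa_0$, and the raw data differ only through the edge-marginal Bernoulli parameters on a window of length $\asymp\Delta$.

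The heart of the argument is controlling $\|M_0-M_1\|_{\mathrm{TV}}$. Using the chain rule for KL divergence along the fixed privatisation order in \eqref{eq:orderinteraction} together with the edge-LDP constraint \eqref{eq:edgeldp}, one obtains a local-privacy strong data-processing inequality of Duchi--Jordan--Wainwright type in its sequentially interactive form: $D_{\mathrm{KL}}(M_0\|M_1)$ is at most an absolute constant times $(e^\alpha-1)^2$ times the sum, over all edge-time coordinates $(i,j,t)$, of the squared total-variation distances between the corresponding raw Bernoulli marginals under $P_0$ and $P_1$. Since those marginals agree off the length-$\asymp\Delta$ window, this sum is $\asymp\Delta\sum_{i\le j}(\Theta^{(2)}_{ij}-\Theta^{(1)}_{ij})^2\asymp\Delta\,\kappa_0^2 n^2\rho^2$, so the constant $68$ in the statement is calibrated to force $D_{\mathrm{KL}}(M_0\|M_1)$ below an absolute constant and hence, via Pinsker, $\|M_0-M_1\|_{\mathrm{TV}}\le1/2$. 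If the plain two-point bound is not tight enough in $n$ and $\rho$ to reach the stated threshold, I would replace $P_1$ by a uniform mixture over alternatives, each perturbing a small random set of edges on the window, and bound $\chi^2(\overline M_1\|M_0)$ instead: after privatisation the cross-terms between distinct perturbation patterns are negligible (they integrate to $1$ coordinatewise), so only the diagonal terms survive, yielding a gain proportional to the number of patterns; the bookkeeping is heavier but the conclusion, and the role of the SNR condition, are the same.

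The main obstacle is this last step: establishing the local-privacy strong data-processing inequality for \emph{sequentially interactive} edge mechanisms, where the interaction order of \eqref{eq:orderinteraction} means each released bit may depend on all previously released bits, so the naive tensorisation must be replaced by a careful conditional argument; and then pushing absolute constants through tightly enough to land exactly on $68$ and on the rate $\Delta/12$ (and, if a composite alternative is used, patching together the different sparsity regimes of the construction). Verifying that the construction in the second paragraph simultaneously satisfies every constraint defining $\mathcal P$ — exactly one change point, minimal spacing $\Delta$, entry-wise sparsity $\rho$, and validity of the privatised probabilities under $\alpha<(2\rho)^{-1}$ — is routine but needs care.
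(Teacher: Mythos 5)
Your overall architecture (Le Cam reduction, then an information contraction bound for edge-LDP channels) is the right one, but as written the argument has a genuine gap: the two-point method you put at the centre cannot reach the stated threshold. With a single pair $P_0,P_1$ differing on a window of length $\asymp\Delta$ by a mean perturbation of Frobenius norm $\kappa=\kappa_0 n\rho$, the strong data-processing bound you invoke gives $D_{\mathrm{KL}}(M_0\|M_1)\lesssim (e^\alpha-1)^2\Delta\sum_{i\le j}(\Theta^{(2)}_{ij}-\Theta^{(1)}_{ij})^2 \asymp (e^\alpha-1)^2\Delta\,\kappa_0^2 n^2\rho^2$. Forcing this below a constant requires $\kappa_0^2\lesssim [n^2\rho^2\Delta(e^\alpha-1)^2]^{-1}$, which is smaller by a factor of $n$ than the class $\mathcal P$ in the lemma ($\kappa_0^2\le[68n\rho^2\Delta(e^\alpha-1)^2]^{-1}$). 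So the mixture argument you relegate to a fallback is not optional; it is the proof. The paper takes $P_1$ to be the uniform mixture over rank-one perturbations $\Theta^{(1)}+\tfrac{\kappa}{n}vv^\top$, $v\in\{\pm1\}^n$ (and compares both mixtures to a no-change null via $\mathrm{TV}(\tilde Z^T,Z^T)\le 2\,\mathrm{TV}(Z^T_0,Z^T)\le\sqrt{2\chi^2(Z^T,Z^T_0)}$). The factor of $n$ is then recovered not because "cross-terms integrate to $1$" — they do not — but because the Ingster--Suslina exponent for a pair $(u,v)$ is $\tfrac{2\Delta\kappa^2(e^\alpha-1)^2}{n^2}(u^\top v)^2$, and $(U^\top V)^2$ for independent Rademacher vectors concentrates at scale $n$ rather than $n^2$; a Hoeffding tail bound on $\exp(c(\mathbf 1^\top V)^2/n^2)$ is what produces the constant $68$.

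The second missing piece is the per-coordinate bound itself. For the $\chi^2$ route one needs, for every sequentially interactive edge mechanism, $\E_{Z_0}\bigl[m_u m_v/m_0^2\bigr]\le\prod_{t\le\Delta}\prod_{i\le j}\exp\bigl(2\Gamma_{ij}\Lambda_{ij}(e^\alpha-1)^2\bigr)$, proved by peeling off one released bit at a time along the interaction order \eqref{eq:orderinteraction}: conditionally on the past, the linear terms vanish because $\int(q_1-q_0)=0$, and the quadratic term $\Gamma_{ij}\Lambda_{ij}\int (q_1-q_0)^2/m_0$ is controlled by $2(e^\alpha-1)^2\Gamma_{ij}\Lambda_{ij}$ using $q_1/q_0\in[e^{-\alpha},e^\alpha]$ together with the lower bound $m_0\ge q_0/2$ (this is where the hypothesis $\alpha<(2\rho)^{-1}$ enters). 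You correctly flag this interactive contraction as "the main obstacle," but it is precisely the content of the lemma's proof, so leaving it unestablished — and in KL rather than the $\chi^2$-with-cross-terms form actually needed for the mixture — leaves the argument incomplete.
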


\Cref{lemma: edgelowerbound} studies an LDP minimax lower bound in the framework put forward by \cite{duchi2018minimax}.  It shows that for dynamic IBNs under edge $\alpha$-LDP, provided $\kappa_0^2 \rho^2 n \Delta (e^{\alpha} - 1)^2 \asymp \kappa_0^2 \rho^2 n \Delta \alpha^2 \lesssim 1$, the localisation error $\Delta^{-1}|\widehat{\eta} - \eta(P)| \geq 1/12$.  This leads to the infeasiblity regime 
\begin{equation}\label{eq-inf-reg-edge}
\kappa_0^2 \rho^2 n  \Delta \alpha^2 \lesssim 1.
\end{equation}

Comparing \eqref{eq-inf-reg-no} and \eqref{eq-inf-reg-edge}, any distribution in the regime \eqref{eq-inf-reg-no} also falls in the regime \eqref{eq-inf-reg-edge}, implying that imposing edge $\alpha$-LDP enlarges the infeasibility regime and makes the localisation task harder.  To be specific, the cost of preserving edge LDP comes from two fronts.

$\bullet$ The effective sample size is decreased from $\Delta$ to $\Delta \alpha^2$.  LDP's impact on the effective sample size is commonly observed in the literature over a wide range of problems \cite[e.g.][]{duchi2018minimax,butucea2020local,berrett2021changepoint,li2022robustness}.

$\bullet$ A more interesting and problem-specific cost of LDP is reflected by the role of the sparsity parameter~$\rho$, which power is raised to $\rho^2$ in \eqref{eq-inf-reg-edge} from $\rho$ in \eqref{eq-inf-reg-no}.  Despite that networks have been studied under LDP constraints, such result is the first time seen.  Similar effects have been observed in different problems under LDP constraint, including the impacts on dimensionality \citep[e.g.][]{berrett2021changepoint} and smoothness levels \citep[e.g.][]{li2022robustness}.  It is interesting to see that in a high-dimensional sparse network problem, this problem-specific cost of LDP appears on the sparsity parameter.

\textbf{The second model} we consider is a dynamic bipartite IBN model (\Cref{def-bibn} and \Cref{asp:model-bi}), the change point analysis of which is not seen in the literature, even without privacy concerns.  In addition to the rows and columns of bipartite IBNs denoting different entities, which is different from well-studied network models, we also allow potentially arbitrary within-row dependence. \Cref{lemma: nodelowerbound} establishes an infeasiblity regime of localising change points in such a model under the bipartite node $\alpha$-LDP defined in \Cref{def:node-ldp}.

\begin{lemma}[Bipartite node $\alpha$-LDP] \label{lemma: nodelowerbound}
Let $\{A(t)\}_{t=1}^T \subset \{0, 1\}^{n_1 \times n_2}$ be a sequence of biadjacency matrices satisfying \Cref{asp:model-bi} with $K = 1$ and let $P^T_{\kappa_0, \Delta, n_1, n_2, \rho}$ denote their joint distribution. Consider the class of distributions 
\[
    \mathcal{P} = \{P^T_{\kappa_0, \Delta, n_1, n_2, \rho}:\, \kappa_0^2 \leq \min \{[20 n_1^{1/2}\rho^2\Delta(e^\alpha-1)^2]^{-1}, \, 1/4\}, \, \Delta \leq T/3\}.
\]
Let $\mathcal{Q}_\alpha^{\mathrm{node}}$ denote the set of all privacy mechanisms that satisfy the bipartite node $\alpha$-LDP constraint in \Cref{def:node-ldp}, for $\alpha \in (0, \min\{1,(4\rho)^{-1}\})$. We have that $$\inf_{Q \in \mathcal{Q}^{\mathrm{node}}_\alpha} \inf_{\hat{\eta}} \sup_{P \in \mathcal{P}} \mathbb{E}_{P,Q}|\widehat{\eta}-\eta(P)| \geq \Delta/12,$$ where $\eta(P)$ denotes the change point location specified by distribution $P$.
\end{lemma}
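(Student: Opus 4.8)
The plan is to prove this minimax lower bound by a testing reduction in the style of Le Cam, combined with the information-contraction inequalities for sequentially interactive $\alpha$-LDP channels from \cite{duchi2018minimax} and a privatised second-moment (Ingster--Suslina type) argument. The overall skeleton is parallel to the proof of the edge-LDP lower bound in \Cref{lemma: edgelowerbound}, the essential new ingredients being that the privacy unit is now a whole row of a biadjacency matrix and that the hard instance exploits within-row dependence, which \Cref{def-bibn} permits. Concretely, if some $\widehat\eta$ had $\sup_{P\in\mathcal P}\mathbb{E}_{P,Q}|\widehat\eta-\eta(P)| < \Delta/12$, then a Markov/averaging argument would turn it into a test distinguishing two sub-families of $\mathcal P$ whose change points differ by order $\Delta$; hence it suffices to exhibit two mixtures of instances $\bar P_0,\bar P_1$ supported on $\mathcal P$, with change points $\eta^{(0)}$ and $\eta^{(1)}$ satisfying $|\eta^{(0)}-\eta^{(1)}| = w \asymp \Delta$, such that $\mathrm{TV}(Q_{\bar P_0},Q_{\bar P_1}) \le 1/2$ for every $Q\in\mathcal Q_\alpha^{\mathrm{node}}$, after which Le Cam's inequality yields $\inf_{\widehat\eta}\sup_{P}\mathbb{E}_{P,Q}|\widehat\eta-\eta(P)| \ge \tfrac{w}{2}\bigl(1-\mathrm{TV}(Q_{\bar P_0},Q_{\bar P_1})\bigr) \ge \Delta/12$ for a suitable choice of $w$. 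The hypothesis $\Delta\le T/3$ is exactly what leaves room to place both change points in $\{2,\dots,T\}$ with spacing at least $\Delta$ on either side.

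For the construction, I would take the pre-change matrix to have all entries equal to $\rho/2$ and, for a sign vector $v\in\{\pm1\}^{n_1}$, let the post-change matrix have $i$-th row equal to $(\rho/2 + v_i\kappa_0\rho)\mathbf{1}_{n_2}^\top$, with the $n_2$ coordinates within each row perfectly dependent, so that each row behaves as a single Bernoulli bit. One checks that $\|\Theta_{\mathrm{post}}-\Theta_{\mathrm{pre}}\|_{\mathrm F} = \kappa_0\sqrt{n_1 n_2}\,\rho$ (so the jump is exactly $\kappa_0$), that all entries lie in $[0,\rho]$ because $\kappa_0\le 1/2$, and that there is exactly one change point. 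Crucially I would place $\eta^{(1)} = T+1-\Delta$ and $\eta^{(0)} = \eta^{(1)}-w$, so that the segment $[1,\eta^{(0)})$ -- common to both hypotheses -- carries no $v$ at all, while the segment $[\eta^{(1)},T]$, on which $v$ does appear under both hypotheses, has length only $\Delta$. The two candidate laws are then the mixtures $\bar P_b = \mathbb{E}_v[P_{b,v}]$, $b\in\{0,1\}$.

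The core of the argument is bounding $\mathrm{TV}(Q_{\bar P_0},Q_{\bar P_1})$ by a privatised second-moment computation. Conditionally on $v$, the joint privatised law factorises over the $n_1 T$ row-units (rows are independent by \Cref{def-bibn}, time points by \Cref{asp:model-bi}), and the two hypotheses differ only on the $w$ time points in $W := [\eta^{(0)},\eta^{(1)})$. On these units, the sequentially interactive $\alpha$-LDP contraction of \cite{duchi2018minimax} replaces the $\chi^2$/KL of the privatised row-channels by a factor $(e^\alpha-1)^2$ times (the square of) the total variation between the raw row-laws, which is $\kappa_0\rho$ per row per time point; the collapse of each row to a single Bernoulli bit is exactly why the sparsity enters as $\rho^2$ rather than $\rho$, as in \eqref{eq-inf-reg-edge}. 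Expanding the squared likelihood ratio of the mixtures and averaging over two independent copies $v,v'$ of the sign pattern, the off-diagonal contributions over distinct rows have zero conditional mean, and a $\cosh$-type bound turns the surviving product into $\exp\bigl(c\,n_1 w^2(e^\alpha-1)^4\kappa_0^4\rho^4\bigr)$ up to lower-order terms; the defining inequality $\kappa_0^2 \le [20\,n_1^{1/2}\rho^2\Delta(e^\alpha-1)^2]^{-1}$ then drives $\chi^2(Q_{\bar P_0}\,\|\,Q_{\bar P_1})$, and hence $\mathrm{TV}(Q_{\bar P_0},Q_{\bar P_1})$, below the required threshold. The square root of $n_1$ in the statement is precisely the gain from this cancellation over the $n_1$ i.i.d.\ sign bits.

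The step I expect to be the main obstacle is controlling the effect of the long common pre-segment and the length-$\Delta$ common post-segment $[\eta^{(1)},T]$: although $v$ is not identifiable from the latter in the relevant regime (its per-sample privatised signal is of order $(e^\alpha-1)\kappa_0\rho$, giving negligible information over $\Delta$ samples), the corresponding tail likelihood ratio $\prod_i\prod_{t\in[\eta^{(1)},T]}(\cdot)$ is not concentrated, so pushing the second-moment computation through the denominator $\mathbb{E}_v[\cdot]$ requires a truncation of the likelihood ratio (in the spirit of Tsybakov's fuzzy-hypotheses method), carried out so as not to destroy either the cancellation over the signs or the per-unit interactive contraction. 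The remaining ingredients -- verifying membership of the constructed instances in $\mathcal P$ with the stated constant, and the constant bookkeeping that upgrades $\mathrm{TV}\le 1/2$ with $w\asymp\Delta$ to the clean bound $\Delta/12$ -- are routine.
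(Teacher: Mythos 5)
Your hard instance is exactly the paper's: perfectly dependent rows collapsing to a single Bernoulli bit, per-row perturbation of size $\kappa_0\rho$ carried by signs $v_i\in\{\pm1\}$, mixture over $v$, and a second-moment ($\chi^2$) computation in which the $\sqrt{n_1}$ gain comes from cancellation over the sign bits; your scaling $\exp\bigl(c\,n_1 w^2(e^\alpha-1)^4\kappa_0^4\rho^4\bigr)$ is the right one. Where you diverge -- and where your plan has a genuine unresolved gap -- is the choice of which two objects to compare. You place both change points near $T-\Delta$ and compare the two mixtures $\bar P_0,\bar P_1$ directly, so that the length-$\Delta$ segment $[\eta^{(1)},T]$ carries the $v$-dependent signal under \emph{both} hypotheses. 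As you yourself observe, this makes the denominator of the likelihood ratio a non-degenerate mixture over $v$, and the second-moment method no longer closes without a truncation/fuzzy-hypotheses argument, which you flag as ``the main obstacle'' but do not carry out. That step is not routine: it is precisely the kind of control of $\chi^2$-divergences between mixtures of private distributions that the paper's conclusion identifies as an open technical difficulty in this area.

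The paper's proof avoids the obstacle entirely by a different comparison scheme. It takes $P_v^T$ with change point at $\Delta$ (signal on $[1,\Delta]$, null afterwards) and a time-reflected family $\tilde P_v^T$ with change point at $T-\Delta$, so the two change points are separated by $T-2\Delta\geq\Delta$. It then bounds $\mathrm{TV}(\tilde Z^T,Z^T)\leq 2\,\mathrm{TV}(Z_0^T,Z^T)\leq\sqrt{2\chi^2(Z^T,Z_0^T)}$, where $Z_0^T$ is the privatisation of the \emph{signal-free} product null ($\rho/2$ throughout); the first inequality is the triangle inequality plus the reflection symmetry of the construction. Because the reference measure is a product with no mixture in it, the denominator is deterministic, the conditional densities agree off $[1,\Delta]$, and the per-unit interactive contraction (the analogue of \Cref{lemma:controlchisquare}, giving a factor $\exp\bigl(2\Gamma\Lambda(e^\alpha-1)^2\bigr)$ per row per time point) factorises cleanly through the sequential interaction; a Hoeffding bound on $\mathbf{1}^{\top}U$ then finishes the computation with the stated constant $20$. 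If you adopt this mixture-versus-null comparison, the rest of your outline goes through essentially verbatim and no truncation is needed.
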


In an LDP minimax framework, \Cref{lemma: nodelowerbound} shows that provided $\kappa_0^2 \rho^2 n_1^{1/2} \Delta \alpha^2 \lesssim 1$, the localisation error $\Delta^{-1}|\widehat{\eta} - \eta(P)| \geq 1/12$.  This leads to the infeasibility regime
\begin{equation}\label{eq-inf-reg-node}
\kappa_0^2 \rho^2 n_1^{1/2} \Delta \alpha^2 \lesssim 1.
\end{equation}

To compare \eqref{eq-inf-reg-edge} and \eqref{eq-inf-reg-node}, we first let $n_1 = n_2 = n$ in \Cref{lemma: nodelowerbound} for convenience.  The infeasibility regime under the node LDP reads as $\kappa_0^2 \rho^2 n^{1/2} \Delta \alpha^2 \lesssim 1$, which compared to \eqref{eq-inf-reg-edge} implies that the cost of node LDP is higher than the edge LDP. To further understand the difference between node LDP and edge LDP, we let $n = \sqrt{n_1n_2}$ in \Cref{lemma: edgelowerbound}.  The infeasibility regime under the edge LDP reads as $\kappa_0^2 \rho^2 (n_1n_2)^{1/2} \Delta \alpha^2 \lesssim 1$, which compared to \eqref{eq-inf-reg-node} highlights the difference of $n_2^{1/2}$, an extra cost of dimensionality. The extra cost captures the difference between privatising vectors with possibly correlated entries under node LDP and privatising discrete values under edge LDP.

\section{Consistent private network change point algorithms}\label{sec: consistentlocalisation}

We have established infeasibility regimes of change point localisation tasks under different network LDP constraints in \Cref{sec:lowerbound} and have understood how the privacy preservation makes the tasks fundamentally harder.  In this section, we provide polynomial-time private algorithms to obtain consistent change point estimators outside of the infeasibility regimes.  A private algorithm has two key ingredients: (1) a privacy mechanism and (2) an algorithm with privatised data as inputs.  For the two models we consider in this paper, we adopt the same change point localisation algorithm, while using different privacy mechanisms.  

The change point localisation algorithm we consider is the network binary segmentation (NBS) algorithm proposed and studied in \cite{wang2021optimal}.  It is shown that NBS provides consistent change point estimators without privacy concerns, under minimax optimal conditions.  For completeness, we include NBS in \Cref{algorithm:MWBS} and introduce the CUSUM statistic below.  For any form of data $\{X_i\}_{i = 1}^T$ and any integer triplet $0 \leq s < t < e \leq T$, the CUSUM statistic is defined as
\[
    \widetilde{X}^{(s,e)}(t) = \sqrt{\frac{e-t}{(e-s)(t-s)}}\sum_{i=s+1}^tX_i - \sqrt{\frac{t-s}{(e-s)(e-t)}}\sum_{i=t+1}^eX_i.
\]

\begin{algorithm}[!ht]
	\begin{algorithmic}
		\INPUT{$\{U(t)\}_{t= 1}^T, \{V(t)\}_{t= 1}^T \subset \mathbb{R}^{n_1 \times n_2}$, $\{ (\alpha_m, \beta_m)\}_{m=1}^M \subset [0, T]$, $\tau_1 > 0$}
		\For{$m = 1, \ldots, M$}  
			\State $[s_m', e_m'] \leftarrow [s,e]\cap [\alpha_m,\beta_m]$, 
			 $(s_m,e_m) \leftarrow [s_m' + 64^{-1} (e'_m - s'_m),e_m' - 64^{-1} (e_m' - s_m')] $  
			\If{$e_m - s_m \ge 1$}
				\State $b_{m} \leftarrow \arg\max_{t = s_m+1, \ldots, e_m-1}  ( \tilde U^{(s_m,e_m)}( t),\tilde V^{(s_m,e_m)} (t))$
				\State $a_m \leftarrow ( \widetilde U^{(s_m,e_m)} (b_m), \widetilde V^{(s_m,e_m)} (b_m))$
			\Else 
				\State $a_m \leftarrow -1$	
			\EndIf
		\EndFor
		\State $m^* \leftarrow \arg\max_{m = 1, \ldots, M} a_{m}$
		\If{$a_{m^*} > \tau$}
			\State add $b_{m^*}$ to the set of estimated change points
			\State NBS$((s, b_{m*}),\{ (\alpha_m,\beta_m)\}_{m=1}^M, \tau)$
			\State NBS$((b_{m*}+1,e),\{ (\alpha_m,\beta_m)\}_{m=1}^M,\tau)$
			
		\EndIf  
		\OUTPUT The set of estimated change points.
		\caption{Network Binary Segmentation. NBS$((s, e), \{ (\alpha_m,\beta_m)\}_{m=1}^M, \tau)$} \label{algorithm:MWBS}
	\end{algorithmic}
\end{algorithm} 

As pointed out in \cite{wang2021optimal}, two sequences of independent networks are required as inputs of \Cref{algorithm:MWBS} in order to estimate the Frobenius norm of an IBN.  In practice, one can split the data to even and odd indices to obtain two sequences of networks.

\subsection[]{Edge $\alpha$-LDP}  \label{sec:orgb73c267}

To privatise a dynamic IBN (\Cref{def-ibn}) under the edge $\alpha$-LDP, we apply the randomised response mechanism \citep{warner1965randomized} independently to every edge. The privacy guarantee follows by virtue of the the randomised response mechanism \cite{dwork2014algorithmic}. To be specific, given data $\{A(t)\}_{t = 1}^T \subset \{0, 1\}^{n \times n}$, let $\{U_{t, i, j}, 1 \leq i \leq j \leq n\}_{t = 1}^T$ be independent Unif$[0, 1]$ random variables that are independent of $\{A(t)\}_{t = 1}^T$.  For any $t \in \{1, \ldots, T\}$ and any integer pair $1 \leq i \leq j \leq n$, let the privatised data be $\{A'(t)\}_{t = 1}^T \subset \{0, 1\}^{n \times n}$ with
\begin{equation}\label{eq-sbc}
    A'_{ij}(t) = A'_{ji}(t) = \begin{cases}
        A_{ij}(t), & U_{t, i, j} \leq e^{\alpha}/(1 + e^{\alpha}), \\
        1 - A_{ij}(t), & \mbox{otherwise}.
    \end{cases}
\end{equation}
Note that due to the symmetry of the networks, each edge is only privatised once. Despite that we are dealing with a high-dimensional, sparse dynamic IBN model, with potentially multiple change points, \Cref{prop: edge upperbound} below shows that this, arguably simplest privacy mechanism not only provides consistent change point estimators, but also is optimal in terms of the signal-to-noise ratio condition required.  

\begin{thm}\label{prop: edge upperbound}
Let $\{A(t)\}_{t=1}^T$ and $\{B(t)\}_{t = 1}^T$ be two independent sequences of adjacency matrices satisfying \Cref{asp:model-bi}.  For an arbitrarily small $\xi > 0$ and an absolute constant $c_0 > 0$, assume that
\begin{equation} \label{eq: stn}
     \kappa_0^2 \rho^2 n \Delta \alpha^2 \geq c_0 \log^{2+\xi}(T).
\end{equation}

Let $\{\widehat{\eta}_k\}_{k = 1}^{\widehat{K}}$ be the output of the NBS algorithm, with inputs:

$\bullet$ $\{A'(t)\}_{t=1}^T$ and $\{B'(t)\}_{t = 1}^T$, privatised version of $\{A(t)\}_{t=1}^T$ and $\{B(t)\}_{t = 1}^T$ obtained through~\eqref{eq-sbc};  $\bullet$ $\{(\alpha_m,\beta_m)\}_{m=1}^M$, random intervals whose end points are drawn independently and uniformly from $\{1,\dotsc,T\}$ such that $\max_{m = 1}^M (\beta_m-\alpha_m) \leq C_R \Delta$, for some constant $C_R>3/2$; and $\bullet$ tuning parameter $\tau$ satisfying $c_1 n \log^{3/2}(T) < \tau < c_2 \kappa_0^2 n^2 \rho^2 \Delta \alpha^2$, where $c_1, c_2 > 0$ are absolute constants.

It holds with probability at least $1 - \exp\{\log(T/\Delta) - c_3M\Delta/T\} - c_4T^{-c_5}$ that
\[
    \widehat{K} = K \quad \mbox{and} \quad \max_{k = 1}^K|\widehat{\eta}_k - \eta_k| \leq c_6 \log(T) \{\sqrt{\Delta}/(\kappa_0 n \rho \alpha) + \sqrt{\log(T)}/(\kappa_0^2 \rho^2 n \alpha^2)\},
\]
where $c_3, c_4, c_5, c_6 > 0$ are absolute constants.
\end{thm}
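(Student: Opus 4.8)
The plan is to observe that the randomised-response output of \eqref{eq-sbc} is again a dynamic IBN in the sense of \Cref{def-ibn} and \Cref{asp:model-bi}, with the change points $\{\eta_k\}$ and the minimal spacing $\Delta$ unchanged but with rescaled jump size and sparsity, and then to invoke the non-private NBS guarantee of \cite{wang2021optimal} for that transformed model. I would first dispose of privacy: each edge $A_{ij}(t)$ is touched exactly once by \eqref{eq-sbc}, the conditional law of $A'_{ij}(t)$ given $A_{ij}(t)=a$ does not depend on $\rightarrow z_{ij}(t)$, and for every measurable $S\subseteq\{0,1\}$ the likelihood ratio in \eqref{eq:edgeldp} equals $\{e^\alpha/(1+e^\alpha)\}/\{1/(1+e^\alpha)\}=e^\alpha$ when $S=\{1\}$, its reciprocal when $S=\{0\}$, and $1$ when $S=\{0,1\}$; in all cases it is at most $e^\alpha$, so the mechanism is edge $\alpha$-LDP and no composition argument is needed.

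\textbf{Transformed model.} Since $\{A_{ij}(t):i\le j\}_t$ and the auxiliary uniforms $\{U_{t,i,j}\}$ are jointly independent, the bits $\{A'_{ij}(t):i\le j\}_t$ are independent Bernoulli, so $\{A'(t)\}$ is an IBN with $\mathbb{E}\{A'(t)\}=\Theta'(t)$ and
\[
  \Theta'_{ij}(t) \;=\; \frac{1}{1+e^\alpha} \;+\; \frac{e^\alpha-1}{e^\alpha+1}\,\Theta_{ij}(t)\;\in\;\Bigl[\tfrac{1}{1+e^\alpha},\,\tfrac{e^\alpha}{1+e^\alpha}\Bigr].
\]
This affine, injective map fixes $\{\eta_k\}$ and $\Delta$, multiplies every jump $\|\Theta(\eta_k)-\Theta(\eta_k-1)\|_{\mathrm{F}}$ by $(e^\alpha-1)/(e^\alpha+1)$, and produces sparsity $\rho'=\max_t\|\Theta'(t)\|_\infty\in[(1+e)^{-1},1)$ for $\alpha\in(0,1)$; in particular $\rho'\asymp 1$, so any network-density lower bound needed by \cite{wang2021optimal} (and the claim $\rho'\ge\log(n)/n$ in the remark of \Cref{sec-problem-setup}) holds for free. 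Consequently the transformed minimal jump size is
\[
  \kappa_0'\;=\;\min_{k}\frac{\|\Theta'(\eta_k)-\Theta'(\eta_k-1)\|_{\mathrm{F}}}{n\rho'}\;=\;\frac{e^\alpha-1}{e^\alpha+1}\cdot\frac{\rho}{\rho'}\cdot\kappa_0\;\asymp\;\alpha\rho\kappa_0,
\]
using $(e^\alpha-1)/(e^\alpha+1)=\tanh(\alpha/2)\asymp\alpha$ on $(0,1)$. Finally $\{A'(t)\}$ and $\{B'(t)\}$ are independent because $\{A(t)\}$, $\{B(t)\}$ and all injected noise are independent, so the two-sample device underlying the bilinear CUSUM in \Cref{algorithm:MWBS} still applies.

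\textbf{Reduction to the non-private analysis.} I would then check that \eqref{eq: stn} is exactly the signal-to-noise hypothesis of \cite{wang2021optimal} for the transformed parameters: substituting $(\kappa_0',\rho',n,\Delta)$ gives $\kappa_0'^2\rho' n\Delta\asymp\alpha^2\rho^2\kappa_0^2 n\Delta$, so \eqref{eq: stn} coincides with their requirement once $c_0$ absorbs the $\asymp$-constants together with $\tanh(\alpha/2)\asymp\alpha$ and $\rho'\asymp1$; likewise their admissible threshold window, of order $(n\rho'\log^{3/2}T,\ \kappa_0'^2 n^2\rho'\Delta)$, contains the stated interval $(c_1 n\log^{3/2}T,\ c_2\kappa_0^2 n^2\rho^2\Delta\alpha^2)$ for suitable $c_1,c_2$. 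Applying the NBS consistency theorem of \cite{wang2021optimal} verbatim to the inputs $\{A'(t)\}$, $\{B'(t)\}$, $\{(\alpha_m,\beta_m)\}_{m=1}^M$, $\tau$ then yields $\widehat{K}=K$ together with $\max_k|\widehat{\eta}_k-\eta_k|$ bounded by their localisation rate evaluated at $(\kappa_0',\rho',n,\Delta)$, on the event of probability at least $1-\exp\{\log(T/\Delta)-c_3 M\Delta/T\}-c_4T^{-c_5}$; re-expressing $\kappa_0' n\rho'\asymp\alpha\rho\kappa_0 n$ and $\kappa_0'^2 n\rho'^2\asymp\alpha^2\rho^2\kappa_0^2 n$ turns that rate into the claimed bound.

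\textbf{Where the work is.} The conceptual content is light --- the privatised entries are genuinely independent Bernoullis, so no new concentration inequality is required --- and the main obstacle is the bookkeeping in the last two paragraphs: confirming that the constant-order $\rho'$ (the source of the $\rho\mapsto\rho^2$ change of exponent relative to the non-private rate \eqref{eq-inf-reg-no}) is compatible with \emph{every} hypothesis of \cite{wang2021optimal}, and that the two-sided $\tau$-window and the localisation rate line up after substitution, including the exact powers of $\log T$ appearing in \eqref{eq: stn} and in the final bound. Should a version of their theorem with the $2+\xi$ exponent not be directly quotable, the alternative is to re-run their CUSUM concentration bounds and binary-segmentation induction under the transformed parameters, which proceeds unchanged.
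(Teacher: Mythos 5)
Your proposal is correct and follows essentially the same route as the paper: recognise that randomised response maps the dynamic IBN to another dynamic IBN with the same change points, jump sizes scaled by $(e^\alpha-1)/(e^\alpha+1)\asymp\alpha$ and sparsity $\rho'\asymp 1$ (so the density condition of \cite{wang2021optimal} holds automatically), then apply their Theorem 1 verbatim with the transformed parameters $\kappa_0'\asymp\alpha\rho\kappa_0$ and $\rho'$. The bookkeeping you flag at the end is exactly what the paper's proof consists of, so there is no gap.
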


\Cref{prop: edge upperbound} shows that, provided $M \gtrsim T \Delta^{-1} \log(T/\Delta)$, it holds with probability tending to one,
\begin{equation}\label{eq-consistent-1}
    \Delta^{-1} \max_{k = 1}^K|\widehat{\eta}_k - \eta_k| \lesssim \Delta^{-1} \log(T) \{\sqrt{\Delta}/(\kappa_0 n \rho \alpha) + \sqrt{\log(T)}/(\kappa_0^2 \rho^2 n \alpha^2)\} \to 0,
\end{equation}
where the second inequality is due to \eqref{eq: stn}.  Recalling the consistency definition in \Cref{sec-problem-setup}, \eqref{eq-consistent-1} implies the consistency of NBS with randomised response privacy mechanism under edge $\alpha$-LDP.  

In view of the condition \eqref{eq: stn} and the edge LDP infeasibility regime \eqref{eq-inf-reg-edge}, up to a logarithmic factor, we unveil a phase transition with boundary $\kappa_0^2 \rho^2 n \Delta \alpha^2 \asymp 1$ and show that the randomised response mechanism is optimal in the minimax sense. This is conceptually interesting since, as pointed out in \cite{qin2017generating}, the privatised network obtained by \eqref{eq-sbc} leads to a dense graph even though the original graph may be sparse and therefore does not represent the original graph well. However, our result shows that this simple non-interactive mechanism is the best one can do for change point localisation, even among interactive mechanisms.

\subsection[]{Bipartite node $\alpha$-LDP}\label{sec: nodeLDPupper}

To privatise a dynamic bipartite IBN (\Cref{def-bibn}) under the bipartite node $\alpha$-LDP, we apply the privacy mechanism developed in \citet{duchi2013local,duchi2018minimax} for privatising vectors with bounded $\ell_\infty$ norm to each row of the biadjacency matrices. This privacy mechanism has been used in the analysis of mean estimation \cite[e.g.][]{duchi2018minimax}, nonparametric density estimation \cite[e.g.][]{duchi2018minimax,li2022robustness} and exact support recovery \cite[e.g.][]{butucea2020sharp} problems under LDP.

Given data $\{A(t)\}_{t = 1}^T \subset \{0, 1\}^{n_1 \times n_2}$, let $\{U_{t, i}\}_{t = 1, i = 1}^{T, n_1}$ be independent Unif$[0, 1]$ random variables that are independent of $\{A(t)\}_{t = 1}^T$ and let $\{\tilde{A}_{ij}(t)\}_{t = 1, i = 1, j = 1}^{T, n_1, n_2}$ be random variables satisfying 
\[
    \mathbb{P}\{\tilde{A}_{ij}(t) = 1 | A_{ij}(t)\} = 1 - \mathbb{P}\{\tilde{A}_{ij}(t) = -1| A_{ij}(t)\} = \{1 + A_{ij}(t)\}/2.  
\]
Let
\[
    B = C_{n_2} (e^\alpha+1)/(e^\alpha-1) \quad \mbox{with} \quad C_{n_2}^{-1} =\begin{cases}  \frac{1}{2^{n_2-1}} \binom{n_2-1}{(n_2-1)/2}, & n_2 \,\mathrm{mod}\, 2 \equiv 1, \\
        \frac{1}{2^{n_2-1}+ \frac{1}{2}\binom{n_2}{n_2/2}} \binom{n_2-1}{n_2/2}, & n_2 \,\mathrm{mod}\, 2 \equiv 0. 
        \end{cases}
\]
The privatised data $\{A'(t)\}_{t = 1}^T \subset \{0, 1\}^{n_1 \times n_2}$ are obtained by sampling 
\begin{equation}\label{eq:duchisampling}
    A'_i(t) \sim \begin{cases}
        \mathrm{Unif}\Big(z \in \{B, -B\}^{n_2} | \sum_{j = 1}^{n_2} z_i \tilde{A}_{ij}(t) \geq 0\Big), & U_{t, i} \leq e^{\alpha}/(1+e^{\alpha}),\\
        \mathrm{Unif}\Big(z \in \{B, -B\}^{n_2} | \sum_{j = 1}^{n_2} z_i \tilde{A}_{ij}(t) \leq 0\Big), & \mbox{otherwise}.
    \end{cases}
\end{equation}

Note that $\|A_i(t)\|_\infty = 1$ for any $i = 1,\dotsc,n_1$ and $t = 1,\dotsc,T.$ Applying (26) in \cite{duchi2018minimax} with $d = n_2$ guarantees that $A'_i(t)$ is an $\alpha$-private version of $A_i(t)$ and therefore satisfies the bipartite node $\alpha$-LDP constraint. In \Cref{thm: nodeupperbound}, we demonstrate that NBS with inputs obtained through \eqref{eq:duchisampling} is consistent in localising change points under bipartite node $\alpha$-LDP constraint.

\begin{thm}\label{thm: nodeupperbound}
Let $\{A(t)\}_{t=1}^T$ and $\{B(t)\}_{t = 1}^T$ be two independent sequences of biadjacency matrices satisfying \Cref{asp:model-bi}.  For an arbitrarily small $\xi > 0$ and an absolute constant $c_0 > 0$, assume that
\begin{equation}
\label{eq: node_stn}
     \kappa^2_0 \rho^2 \min \{\sqrt{n_1/n_2}, \, n_1/n_2\} \Delta \alpha^2 \geq c_0 \log^{2+\xi}(Tn_1n_2).
\end{equation}

Let $\{\widehat{\eta}_k\}_{k = 1}^{\widehat{K}}$ be the output of the NBS algorithm with inputs:

$\bullet$ $\{A'(t)\}_{t=1}^T$ and $\{B'(t)\}_{t = 1}^T$, privatised version of $\{A(t)\}_{t=1}^T$ and $\{B(t)\}_{t = 1}^T$ obtained through~\eqref{eq:duchisampling};  $\bullet$ $\{(\alpha_m,\beta_m)\}_{m=1}^M$, random intervals whose end points are drawn independently and uniformly from $\{1,\dotsc,T\}$ such that $\max_{m = 1}^M (\beta_m-\alpha_m) \leq C_R \Delta$, for some constant $C_R>3/2$; and $\bullet$ tuning parameter $\tau$ satisfying $c_1 n_2\alpha^{-2} \log^2(Tn_1n_2)\max\{\sqrt{n_1n_2},n_2\} < \tau < c_2 \kappa_0^2n_1n_2\rho^2 \Delta$, where $c_1, c_2 > 0$ are absolute constants.

It holds with probability at least $1 - \exp\{\log(T/\Delta) - c_3M\Delta/T\} - c_4 T^{-c_5}$ that $\widehat{K} = K$ and
\[%begin{equation}\label{eq: localisation_node}
     \max_{k = 1}^K|\widehat{\eta}_k - \eta_k| \leq c_6 \log(Tn_1n_2)\Bigg(\frac{\sqrt{\Delta }}{\kappa_0\rho\alpha}\sqrt{\frac{n_2}{n_1}}+\frac{{\log(Tn_1n_2)}}{\rho^2\alpha^2\kappa_0^2}\max\Bigg\{\sqrt{\frac{n_2}{n_1}},\frac{n_2}{n_1}\Bigg\}\Bigg),
\]%end{equation}
where $c_3, c_4, c_5, c_6 > 0$ are absolute constants.
\end{thm}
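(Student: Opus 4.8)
The plan is to recognise the estimator as the network binary segmentation algorithm (\Cref{algorithm:MWBS}) run on the privatised inputs $U(t) = A'(t)$, $V(t) = B'(t)$, and to deduce its consistency and the error bound from the general analysis of \cite{wang2021optimal} after substituting the correct privatised signal and noise; privacy itself is not at stake, being guaranteed before the statement via \cite{duchi2018minimax}, so the whole burden is probabilistic. Two events drive the argument. The first is a purely combinatorial ``good interval'' event, that among the $M$ random intervals each true change point $\eta_k$ is isolated by some $(\alpha_m,\beta_m)$ with $\Omega(\Delta)$ room on either side and length $O(\Delta)$; this holds with probability $1 - \exp\{\log(T/\Delta) - c_3 M\Delta/T\}$ and is identical to the non-private case. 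The second is the concentration event that, simultaneously over all $0 \le s < t < e \le T$, the CUSUM inner product $\langle \widetilde U^{(s,e)}(t), \widetilde V^{(s,e)}(t)\rangle$ lies within a controlled window of its mean $\|\widetilde\Theta^{(s,e)}(t)\|_{\mathrm F}^2$, where $\widetilde\Theta^{(s,e)}(t)$ is the CUSUM of $\{\Theta(r)\}$ (the equality of mean and $\|\widetilde\Theta^{(s,e)}(t)\|_{\mathrm F}^2$ uses independence of the two sequences and unbiasedness of the mechanism). Since the population signal is thereby the same as in \cite{wang2021optimal}, only this window changes, and once it is known the no-over-segmentation, no-under-segmentation and localisation steps of \cite{wang2021optimal} transfer with only the noise level and threshold adapted.

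The core new work is the concentration window. I would first record the law of the Duchi output: with $B = C_{n_2}(e^\alpha+1)/(e^\alpha-1) \asymp \sqrt{n_2}/\alpha$, one has $A'_{ij}(t) \in \{-B,B\}$, $\mathbb{E}\{A'_{ij}(t)\} = \Theta_{ij}(t)$, the rows $\{A'_i(t)\}$ independent over $i$ and over $t$, and --- the point that matters because \Cref{def-bibn} allows arbitrary within-row dependence --- the within-row covariance has bounded operator norm, $\|\mathrm{Cov}\{A'_i(t) \mid A_i(t)\}\| \lesssim B^2$, since a point drawn uniformly on a half-space of $\{-B,B\}^{n_2}$ has covariance $B^2 I$ plus a rank-one perturbation of order $B^2$ (equivalently, pairwise correlations within a row are $O(1/n_2)$). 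Writing $w_r$ for the CUSUM weights, which satisfy $\sum_r w_r^2 = 1$ and $\sum_r w_r = 0$ (so $\widetilde\Theta^{(s,e)}(t)$ is supported on the jumps inside $[s,e]$), I would split
\[
\langle \widetilde U,\widetilde V\rangle - \|\widetilde\Theta\|_{\mathrm F}^2 = \langle \widetilde U - \widetilde\Theta,\ \widetilde\Theta\rangle + \langle \widetilde\Theta,\ \widetilde V - \widetilde\Theta\rangle + \langle \widetilde U - \widetilde\Theta,\ \widetilde V - \widetilde\Theta\rangle,
\]
each term being a sum over the $n_1$ independent rows. The two linear terms are controlled by Bernstein's inequality (each per-row summand is bounded, mean zero, with variance governed by $\|\widetilde\Theta_i\|_2^2$ and by $B^2$), and the bilinear term by conditioning on one sequence, using the bounded-covariance bound to reduce to a sum of sub-exponential terms of variance $\lesssim B^4 n_2$ each, and Bernstein again. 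A union bound over the $O(T^3)$ triples yields a window of order $\log^{2}(Tn_1n_2)\, B^2 \max\{\sqrt{n_1 n_2},\, n_2\} \asymp \log^{2}(Tn_1n_2)\, n_2 \alpha^{-2} \max\{\sqrt{n_1 n_2}, n_2\}$ for the bilinear part, with a $\sqrt d$-local, lower-order version for the linear parts, matching the threshold range and the two error terms in the statement.

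It then remains to combine. On a good isolating interval the signal at $t$ near $\eta_k$ is $\|\widetilde\Theta^{(s,e)}(t)\|_{\mathrm F}^2 \gtrsim \kappa_0^2 \rho^2 n_1 n_2 \Delta$, and \eqref{eq: node_stn} is precisely the statement that this exceeds $\log^{2+\xi}$ times the noise window, because
\[
\frac{\kappa_0^2 \rho^2 n_1 n_2 \, \alpha^2}{n_2 \max\{\sqrt{n_1 n_2},\, n_2\}} = \kappa_0^2 \rho^2 \alpha^2 \min\Big\{\sqrt{n_1/n_2},\, n_1/n_2\Big\};
\]
hence the admissible $\tau$ lies strictly between the null fluctuations and the detectable signal, which forces $\widehat K = K$. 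Comparing the CUSUM at the fitted split point $b_{m^*}$ with the one at the true $\eta_k$, the decay of the population CUSUM at distance $d$ from the change must dominate the local noise there, and solving the resulting inequality gives the two-term bound --- the first term from the linear/cross pieces, the second from the bilinear piece.

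I expect the main obstacle to be precisely the within-row dependence, which is absent in the edge/IBN analysis: $A'_i(t)$ is not a product distribution, so $\langle \widetilde U_i - \widetilde\Theta_i, \widetilde V_i - \widetilde\Theta_i\rangle$ is a genuine random quadratic form, and one must establish that the Duchi output's covariance has operator norm $O(B^2)$ --- i.e.\ off-diagonal correlations $O(1/n_2)$ rather than $O(1)$ --- or else the variance, and with it the SNR condition, would be off by a factor of $n_2$. A secondary difficulty is that the CUSUM weights have $\ell_1$ norm as large as $\sqrt{e-s}$, so naive boundedness bounds lose polynomial factors; the remedy is to work with sub-Gaussian/sub-exponential tails throughout (a per-coordinate privatised CUSUM is sub-Gaussian with variance proxy $O(B^2)$ irrespective of $\|w\|_1$) and to exploit $\sum_r w_r = 0$ so that constant stretches of $\Theta$ contribute nothing to the signal term.
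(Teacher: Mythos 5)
Your proposal matches the paper's proof in both structure and substance: the same two events (good random intervals, and uniform concentration of $\langle\widetilde U^{(s,e)}(t),\widetilde V^{(s,e)}(t)\rangle$ about $\|\widetilde\Theta^{(s,e)}(t)\|_{\mathrm F}^2$), the same three-term decomposition into two linear pieces and a bilinear piece each handled by Bernstein-type inequalities after conditioning, the same identification of the within-row covariance operator-norm bound $\|\Sigma\|\lesssim B^2$ as the key new ingredient (the paper's \Cref{lemma: covariance}), and the same transfer of the induction steps of \cite{wang2021optimal} with the new noise window. No meaningful divergence from the paper's argument.
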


\Cref{thm: nodeupperbound} shows that, provided $M \gtrsim T \Delta^{-1} \log(T/\Delta)$, NBS with privatised inputs through channel \eqref{eq:duchisampling} is consistent.  When $n_2 \asymp 1$, the signal-to-noise ratio condition \eqref{eq: node_stn} and the infeasibility regime~\eqref{eq-inf-reg-node} demonstrate a phase transition with boundary $\kappa_0^2 \rho^2 n_1^{1/2} \Delta \alpha \asymp 1$, up to a logarithmic factor.  When $n_2$ is allowed to diverge, a gap between the infeasibility regime \eqref{eq-inf-reg-node} and \eqref{eq: node_stn} - the regime where our proposed method is deemed to be consistent - emerges.  The larger $n_2$ is, the larger the gap is.  It is interesting to understand further what happens within the gap and we leave this as an open problem, which echos the challenging problems in high-dimensional statistical inference under LDP.  

To conclude this section, we would like to present some result of independent interest.  It is studied in the existing literature \cite[Appendix I.3 in][]{duchi2018minimax} that the privatised output from \eqref{eq:duchisampling} is unbiased, i.e.~$\mathbb{E}\{A_i'(t)\} = \mathbb{E}\{A_i(t)\}$, while the covariance structure of the privatised output is unknown.  In \Cref{lemma: covariance}, we carefully analyse the covariance matrix of the privatised output and provide an upper bound on its operator norm. Due to its independent interest, we denote the raw data vector as $V = (V_i) \in \mathbb{R}^d$ and denote its privatised output obtained through \eqref{eq:duchisampling} as $Z = (Z_i) \in \mathbb{R}^d$.

\begin{lemma} \label{lemma: covariance}
For any random vector $V \in \mathbb{R}^d$ with $\|V\|_\infty \leq 1$, we have that
\begin{equation}\label{eq:cov1}
    \mathrm{Var}(Z_i) = B^2 - \{\E(V_i)\}^2, \quad i = 1, \ldots, d;
\end{equation}
and
\begin{equation}\label{eq:cov2}
    \mathrm{Cov}(Z_i, Z_j) = \begin{cases}
    -\E(V_i)\E(V_j), & d \mbox{ mod } 2 \equiv 1,\\
    -\E(V_i)\E(V_j) - \frac{C_{d,\alpha}}{d^{1/2}\alpha^2}\E(V_iV_j), & d \mbox{ mod } 2 \equiv 0,
\end{cases} \quad \forall \, i \neq j,
\end{equation}
where where $C_{d,\alpha} \in [C_0,C_1]$  for some absolute constants $C_1>C_0>0$.  Letting $\Sigma_Z$ be the covariance matrix of $Z$, it holds that
\begin{equation}\label{eq: operatornorm}
    \|\Sigma_Z\| \leq \begin{cases}
    B^2+\|\E (V)\|_2^2, & d \mbox{ mod } 2 \equiv 1 \\
    B^2+\|\E (V)\|_2^2+\frac{c\sqrt{d}}{\alpha^2}\sqrt{\max_{i,j}\E(V_iV_j)} & d \mbox{ mod } 2 \equiv 0, 
    \end{cases}
\end{equation}
where $c > 0$ is an absolute constant.
\end{lemma}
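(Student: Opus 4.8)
The plan is to reduce all three displays to computing first and cross-second moments of the coordinates of $Z$, exploiting a clean conditional structure of the channel \eqref{eq:duchisampling}. Write $\tilde V=(\tilde V_1,\dots,\tilde V_d)\in\{-1,1\}^d$ for the intermediate vector (the analogue of $(\tilde A_{ij}(t))_{j}$), so that, conditionally on $V$, the $\tilde V_j$ are independent with $\E(\tilde V_j\mid V)=V_j$; hence $\E(\tilde V_i\tilde V_j)=\E(V_iV_j)$ for $i\neq j$. Since each $Z_i\in\{B,-B\}$ we have $Z_i^2=B^2$ surely, and combining this with the unbiasedness $\E(Z_i)=\E(V_i)$ recorded in \cite[Appendix I.3]{duchi2018minimax} yields $\mathrm{Var}(Z_i)=B^2-\{\E(V_i)\}^2$, which is \eqref{eq:cov1}.

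For the cross-covariances I would set $W_k:=Z_k\tilde V_k/B\in\{-1,1\}$. The key observation is that, conditionally on $\tilde V=\tilde v$, the coordinatewise product map $z\mapsto (z_k\tilde v_k/B)_k$ is a bijection of $\{B,-B\}^d$ onto $\{-1,1\}^d$ sending the event $\langle z,\tilde v\rangle\ge 0$ (resp.\ $\le 0$) to $\sum_k w_k\ge 0$ (resp.\ $\le 0$). Therefore the conditional law of $W$ given $\tilde V$ is the fixed mixture $\mu$ on $\{-1,1\}^d$ putting mass $e^\alpha/(1+e^\alpha)$ on the uniform distribution over $\{w:\sum_k w_k\ge 0\}$ and mass $1/(1+e^\alpha)$ on the uniform distribution over $\{w:\sum_k w_k\le 0\}$; in particular $W\sim\mu$ is independent of $\tilde V$ and $Z_k=BW_k\tilde V_k$. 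Consequently, for $i\neq j$, $\E(Z_iZ_j)=B^2\,\E_\mu(W_iW_j)\,\E(\tilde V_i\tilde V_j)=B^2\gamma_d\,\E(V_iV_j)$, where $\gamma_d:=\E_\mu(W_iW_j)$ is independent of the chosen pair by exchangeability of the coordinates under $\mu$. Writing $S:=\sum_k W_k$ and using $W_k^2=1$ gives $\E_\mu(S^2)=d+d(d-1)\gamma_d$, and an elementary symmetry argument shows $\E_\mu(S^2)=d$ for $d$ odd (so $\gamma_d=0$) and $\E_\mu(S^2)=d/(1+p_0)$ for $d$ even, where $p_0:=\mathbb{P}(S=0)=2^{-d}\binom{d}{d/2}$, whence $\gamma_d=-p_0/\{(d-1)(1+p_0)\}$. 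Since $\mathrm{Cov}(Z_i,Z_j)=\E(Z_iZ_j)-\E(V_i)\E(V_j)$, this gives \eqref{eq:cov2} with $C_{d,\alpha}=d^{1/2}\alpha^2B^2p_0/\{(d-1)(1+p_0)\}$.

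It then remains to show $C_{d,\alpha}\in[C_0,C_1]$. For this I would invoke Stirling's estimates to get $p_0\asymp d^{-1/2}$, use $\binom{d-1}{d/2}=\tfrac12\binom{d}{d/2}$ (for $d$ even) together with $\binom{d}{d/2}\asymp 2^d d^{-1/2}$ to get $C_d\asymp d^{1/2}$ in the definition of $B$, and note that for $\alpha\in(0,1)$ the factor $\alpha(e^\alpha+1)/(e^\alpha-1)$ lies between two absolute constants, so that $\alpha^2B^2\asymp C_d^2\asymp d$; substituting yields $C_{d,\alpha}\asymp d^{1/2}\cdot d\cdot d^{-1/2}\cdot d^{-1}\asymp 1$. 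I expect this bookkeeping with central binomial coefficients and the exact $\alpha$-dependence of $B$ to be the most delicate part, although it is entirely elementary.

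Finally, for \eqref{eq: operatornorm} I would read off the matrix form of $\Sigma_Z$ from \eqref{eq:cov1}--\eqref{eq:cov2}: for $d$ odd, $\Sigma_Z=B^2I_d-\E(V)\E(V)^\top$, giving $\|\Sigma_Z\|\le B^2+\|\E(V)\|_2^2$; for $d$ even, $\Sigma_Z=B^2I_d-\E(V)\E(V)^\top-\frac{C_{d,\alpha}}{d^{1/2}\alpha^2}N$ with $N=\E(VV^\top)-\mathrm{diag}(\E(V_1^2),\dots,\E(V_d^2))$. Since $\E(VV^\top)\succeq 0$, $\|\E(VV^\top)\|\le\mathrm{tr}\,\E(VV^\top)=\sum_i\E(V_i^2)\le d\max_i\E(V_i^2)\le d\max_{i,j}\E(V_iV_j)$; combined with $\|\mathrm{diag}(\dots)\|\le\max_i\E(V_i^2)\le\max_{i,j}\E(V_iV_j)$, with $C_{d,\alpha}\le C_1$, and with $\max_{i,j}\E(V_iV_j)\le 1$, the triangle inequality gives $\frac{C_{d,\alpha}}{d^{1/2}\alpha^2}\|N\|\le \frac{c\sqrt d}{\alpha^2}\sqrt{\max_{i,j}\E(V_iV_j)}$, completing \eqref{eq: operatornorm}. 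Overall the only genuine obstacle is stating and justifying the conditional reduction $Z_k=BW_k\tilde V_k$ with $W\perp\tilde V$; everything downstream is moment bookkeeping.
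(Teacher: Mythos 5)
Your proof is correct, and for the heart of the lemma, the cross-covariance formula \eqref{eq:cov2}, it takes a genuinely different route from the paper. The paper conditions on $\tilde V=\tilde v$ and evaluates $\sum_{z:\langle z,\tilde v\rangle\ge 0}z_iz_j$ by explicit binomial-coefficient telescoping, splitting the odd and even cases, and then feeds the result through the law of total covariance. You instead observe that $W_k:=Z_k\tilde V_k/B$ has a conditional law $\mu$ that does not depend on $\tilde V$, so $W\perp(\tilde V,V)$ and $\E(Z_iZ_j)=B^2\gamma_d\,\E(V_iV_j)$ with $\gamma_d=\E_\mu(W_iW_j)$ determined by exchangeability from $\E_\mu(S^2)$, which a single reflection symmetry evaluates as $d$ ($d$ odd) and $d/(1+p_0)$ ($d$ even). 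I checked that your closed form $C_{d,\alpha}=d^{1/2}\alpha^2B^2p_0/\{(d-1)(1+p_0)\}$ coincides exactly with the paper's $\tfrac{2B^2}{dM}\binom{d-2}{d/2-1}\cdot d^{1/2}\alpha^2$ via the identity $d\binom{d}{d/2}=4(d-1)\binom{d-2}{d/2-1}$, and the remaining Stirling bookkeeping ($p_0\asymp d^{-1/2}$, $\alpha B\asymp C_d\asymp\sqrt d$) is the same in both arguments. Your treatment of \eqref{eq:cov1} and \eqref{eq: operatornorm} matches the paper's (the paper is silent on how it passes from $\max_{i,j}\E(V_iV_j)$ to its square root; your route via $\max_{i,j}\E(V_iV_j)\le 1$ and the trace bound on the PSD matrix $\E(VV^\top)$ is a legitimate way to land on the stated form). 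What your approach buys is a structural explanation — the mechanism factorises as an independent "direction noise" $W\sim\mu$ times the Rademacher encoding $\tilde V$ — which makes the vanishing of the odd-$d$ correction transparent and avoids case-by-case manipulation of partial binomial sums; what the paper's direct summation buys is that it never needs to articulate or justify the conditional independence $W\perp\tilde V$, which, as you note, is the one step requiring care.
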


\section{Numeric results}

We generate a sequence of $T$ independent IBNs (Defintion \ref{def-ibn}) or bipartite IBNs with independent edges (Definition \ref{def-bibn}) when considering node LDP, with the network size $n_1 = n_2 = n = 50$ and entrywise sparsity level $\rho = 0.4$.  There is one and only one change point with a balanced spacing, i.e.~the change point $\eta = \Delta = T/2$, where $\Delta$ is the minimal spacing.  The expectations of the adjacency matrix before and after change point are $\Theta_{\mathrm{pre}} = 0.1 \mathbf{1}_{n \times n}$ and $\Theta_{\mathrm{post}} = 0.4  \mathbf{1}_{n \times n}$, respectively, where $\mathbf{1}_{n \times n} \in \mathbb{R}^{n \times n}$ has all entries being one.  The normalised jump size is therefore $\kappa_0 = \|\Theta_{\mathrm{post}} - \Theta_{\mathrm{pre}}\|_{\mathrm{F}}/(n\rho) = 0.75$.  We consider different the minimal spacing $\Delta$ and privacy budget $\alpha$ in the simulations.

We use a simplified version of NBS algorithm (\Cref{algorithm:MWBS}) based on the binary segmentation procedure \cite[e.g.][]{vostrikova1981detecting}. For small number of change points, our theory still holds for this computationally less demanding algorithm. The thresholding tuning parameter, above which change points are declared, is fixed to be $n\log^{1.5}(T)/10$, $n\log^{1.5}(T)/30$ and $n^2\log^2(n^2T)/10$ in the no privacy, edge LDP and node LDP cases, respectively. 

Let the estimated set of change points be $\{\hat{\eta}_i\}_{i = 1}^{\hat{K}}$ and the true change points be $\eta$.  We use $\max_{i}|\hat{\eta}_i - \eta|/\Delta \in [0,1]$ to evaluate the performances.  If no change point is returned, we output one. This is the same as using the scaled two sided Hausdorff distance $d_H(S_1,S_2)/\Delta$ as the metric \cite[e.g.][]{li2021adversarially,wang2020univariate} and we expect it to diminish as $\Delta$ grows. For any subset $S_1$, $S_2 \subset \mathbb{Z}$, the Hausdorff distance $d_H(S_1,S_2)$ between $S_1$ and $S_2$ is defined as
    \[
     d_H(S_1,S_2) = \max\left\{\max_{s_1 \in S_1}\min_{s_2\in S_2}|s_1-s_2|, \max_{s_2\in S_2}\min_{s_1\in S_1} |s_1-s_2|\right\}.
   \]
The sets $S_1$ and $S_2$ correspond to the set of true change points and estimated change points. If one of $S_1$ and $S_2$ is $\emptyset$, then we use the convention $d_H(S_1,S_2) = \Delta$.

The result is collected in \Cref{fig:k0-0.75}.  Without any privacy constraint, i.e.~using raw data, the change can be easily detected with $\Delta$ as small as $7$.  Imposing privacy guarantee requires a larger $\Delta$ to consistently localise the change points. The theoretical cost is quantified by our theory under both edge LDP and node LDP. We can see from the three plots in the first row that for the same sample size, the performance deteriorates as $\alpha$ decreases under edge LDP.  The node LDP is a more stringent requirement, compared to the edge LDP. From the three plots on the second row, we can see that, with the same sample size, the change can be perfectly localised with no error in the no privacy case, and very well localised under edge LDP with $\alpha = 0.1$, but in order to obtain a reasonable estimator, the node information can only be protected at level $\alpha = 1$.

\begin{figure}[ht!!]
    \centering
    \includegraphics[width = 0.9\linewidth]{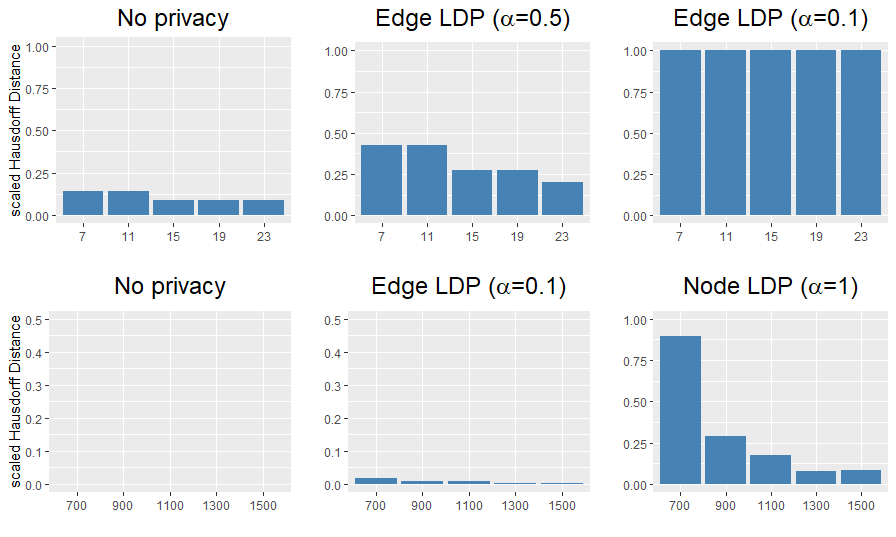}
    \caption{Simulation results. The median of the scaled Hausdorff distance $d_H(S_1,S_2)/\Delta$ over $100$ repetitions are plotted against varying minimal spacing $\Delta$ on the $x$-axis, under different privacy constraints. This setting has $\kappa_0 = 0.75$, $\rho = 0.4$, $n_1 = n_2 = n = 50$}
    \label{fig:k0-0.75}
\end{figure}

\section{Conclusion}\label{conclusion}

In this paper, we studied network change point localisation problems under two forms of LDP constraints. New signal-to-noise conditions \eqref{eq: stn} and \eqref{eq: node_stn} are derived and by comparing with the non private counterpart, we quantify the cost of privacy as discussed in Sections~\ref{sec:lowerbound} and \ref{sec: consistentlocalisation}. A change in the scaling of sparsity parameter in the private signal to noise conditions reveals a new challenge of learning dynamic networks with possibly sparse and correlated entries. The results are summarised in the table below, where for clarity we ignored logarithmic factors and consider $n_1 = n_2 = n$ in the bipartite node LDP case. 

\begin{table}[ht]
\centering
\begin{tabular}{cccc}
\toprule
 No privacy \cite{wang2021optimal}         & Edge LDP [\eqref{eq-inf-reg-edge}\&\eqref{eq: stn}] &  Node LDP  & Node LDP  \\ 
 & & lower bound \eqref{eq-inf-reg-node} & upper bound \eqref{eq: node_stn} \\
 
 \midrule
$\kappa_0^2 \rho \asymp \frac{1}{n\Delta}$ & $\kappa_0^2\rho^2 \asymp \frac{1}{n\Delta \alpha^2} $ & $\kappa_0^2 \rho^2 \lesssim \frac{1}{\sqrt{n}\Delta\alpha^2} $ &$\kappa_0^2 \rho^2 \gtrsim \frac{1}{\Delta\alpha^2}  $                \\ 
\bottomrule
\label{table: boundary}
\end{tabular}
\end{table}

The question left open is closing the gap in the node LDP case. From the lower bound perspective, we identify a technical challenge in controlling the $\chi^2$-divergence between mixtures of private distributions. Although some techniques have been developed for discrete distributions \cite[e.g.][]{berrett2020locally,acharya2020inference}, the counterpart for high-dimensional discrete distributions is still largely unexplored. As for the upper bound, our current method is non-interactive. Since different entries in our network model follow different distributions, we therefore expect that allowing interaction within networks cannot improve the signal to noise ratio condition, while interaction across time points requires novel methodology that can handle temporal dependence, account for the decay of privacy and is suitable for the task of change point localisation. We leave that as our future work.

\section*{Acknowledgements and Disclosure of Funding}
The authors would like to thank Harry Giles for helpful discussion and suggesting the idea behind the proof of \Cref{prop: edge upperbound}. TBB acknowledges the support of an Engineering and Physical Sciences Reseach Council (EPSRC) New Investigator Award EP/W016117/1.  YY acknowledges the support of an EPSRC Standard Grant EP/V013432/1.

\newpage
\bibliographystyle{agsm}
\bibliography{ref}

\newpage
\section*{Supplementary material}

The supplementary material contains proofs of the main results in \Cref{sec:lowerbound} and \Cref{sec: consistentlocalisation}. 

\appendix

\section[]{Proofs of results in \Cref{sec:lowerbound}}

\begin{proof}[Proof of \Cref{lemma: edgelowerbound}]
Let $\kappa^2 = \frac{n}{68(e^\alpha-1)^2\Delta}$, $ v \in \{1,-1\}^n$ and $P_v^T$ be the joint distribution of a collection of independent adjacency matrices $\{A(t)\}_{t=1}^T$ such that 
\[
\mathbb{E}[A_{ij}(t)] = \rho/2+ \frac{\kappa}{n}(vv^\top)_{ij}\quad 1\leq i\leq j \leq n, \qquad t \in \{1,\dotsc,\Delta\} 
\]
and
\[
\mathbb{E}[A_{ij}(t)] = \rho/2 \quad 1\leq i\leq j \leq n, \qquad t \in \{\Delta+1,\dotsc,T\}.
\]
The distribution of each network at time $t$ is denoted by $P_{v,t} = \prod_{1\leq i \leq j\leq n}P_{v,t,(i,j)}$. Note that $\eta(P_v^T) = \Delta$, $\|A(\Delta)-A(\Delta+1)\|_F^2 = \kappa^2$, and $\kappa_0^2 = \frac{1}{68n\rho^2\Delta(e^\alpha-1)^2}$. We are constrained by $\kappa/n < \rho/2$, which is equivalent to 
$\kappa_0^2 \leq 1/4$. Therefore, for each $v$, we have $P_v^T \in \mathcal{P}$. Similarly, let $\tilde{P}_v^T$ be the joint distribution of a collection of independent adjacency matrices $\{A(t)\}_{t=1}^T$ such that 
\[
\mathbb{E}[A_{ij}(t)] = \rho/2+ \frac{\kappa}{n}(vv^\top)_{ij}\quad 1\leq i\leq j \leq n, \qquad t \in \{T-\Delta+1,\dotsc,T\} 
\]
and
\[
\mathbb{E}[A_{ij}(t)] = \rho/2 \quad 1\leq i\leq j \leq n, \qquad t \in \{1,\dotsc,T-\Delta\}.
\]
The distribution of each network at time $t$ is denoted by $\tilde{P}_{v,t} = \prod_{1\leq i \leq j\leq n}\tilde{P}_{v,t,(i,j)}$. Note that $\eta(\tilde{P}_v^T) = T-\Delta$ and $\tilde{P}_v^T \in \mathcal{P}$ for each $v$. Also, $|\eta({P}_v^T) - \eta(\tilde{P}_v^T)| = T-2\Delta$ for each $v$. 
Further, let $Z^T_v$ and $\tilde{Z}^T_v$ be the corresponding joint private distribution generated via some \emph{edge} $\alpha$ LDP mechanism $Q$, i.e.
\begin{align}\label{eq: notationlowerbound}
    Z^T_v(\cdot) = \int Q(\cdot|a_{1:n,1:n}(1:T)) dP^T_v(a_{1:n,1:n}(1:T)) 
\end{align}
and $\tilde{Z}_v^T$ takes a similar form. Write $Z^T = \frac{1}{2^n}\sum_{v \in \{1,-1\}^n}Z_v^T $ and $\tilde{Z}^T = \frac{1}{2^n}\sum_{v \in \{1,-1\}^n}\tilde{Z}_v^T$. 
It follows from Le Cam's Lemma \cite[e.g.][]{yu1997assouad} that for $\Delta \leq T/3$
\[
\mathcal{R}_{n,\alpha}(\eta(\mathcal{P})) \geq \inf_{Q \in \mathcal{Q}_\alpha^{\text{edge}}}\frac{T}{6}(1-\mathrm{TV}(\tilde{Z}^T,Z^T)) \geq \inf_{Q \in \mathcal{Q}_\alpha^{\text{edge}}} \frac{\Delta}{6} (1-\mathrm{TV}(\tilde{Z}^T,Z^T)).
\]
To simplify the problem, we write $P_0^T$ as the joint distribution of independent and identically distributed adjacency matrices $\{B(t)\}_{t=1}^{T}$ such that $\mathbb{E}[(B(t))_{ij}] = \rho/2$ for $ 1\leq i \leq j \leq n$. The corresponding marginal distribution of the privatised data is denoted by ${Z}_0^T$. Now, notice that by triangle inequality and the symmetry of our construction, we have 
\[
\mathrm{TV}(\tilde{Z}^T,Z^T) \leq 2 \mathrm{TV}(Z_0^T,Z^T) \leq \sqrt{2\chi^2(Z^T,Z_0^T)},
\]
where the last inequality is due to \cite[][eq.(2.27)]{tsybakov2009introduction}. 
In the rest of the proof, we will show that with our choice $\kappa^2\Delta(e^\alpha-1)^2 = n/68$, we have $\chi^2(Z^T,Z_0^T) \leq 1/8$ and therefore $\mathcal{R}_{n,\alpha}(\eta(\mathcal{P})) \geq \Delta/12$ as claimed. 

We need some notations. We write $m_v(z_{1:n,1:n}(1:T))$ as the density of the measure $Z^T_v$ and similarly for the notation $m_0(z_{1:n,1:n}(1:T))$. We also write $\Gamma = (\kappa/n) uu^\top$, where $u \in \{-1,1\}^n$, $\Lambda = (\kappa/n) vv^\top$ and use $U, V \in \mathbb{R}^n$ to denote two independent random vectors with entries being independent Rademacher random variables. With these notations, we compute 
\begin{align*}
    \chi^2(Z^T,Z_0^T) + 1 &= \frac{1}{4^n}\sum_{u,v \in \{-1,1\}^n}\E_{Z^T_0}\Bigg(\frac{m_v(Z_{1:n,1:n}(1:T)) m_u(Z_{1:n,1:n}(1:T))}{m_0(Z_{1:n,1:n}(1:T)) m_0(Z_{1:n,1:n}(1:T))}\Bigg) \\
    &\leq \frac{1}{4^n}\sum_{u,v \in \{-1,1\}^n} \Bigg[\prod_{t = 1}^{\Delta}\prod_{1\leq i\leq j\leq n}\exp(2\Gamma_{ij}\Lambda_{ij}(e^\alpha-1)^2) \Bigg]\\
    &= \E_{U,V}\Bigg[\exp\Big(\frac{2\Delta\kappa^2(e^\alpha-1)^2}{n^2}(U^\top V)^2\Big)\Bigg] \\
    & = \E_V\Bigg[\exp\Big(\frac{2\Delta\kappa^2(e^\alpha-1)^2}{n^2}(\mathbf{1}^\top V)^2\Big)\Bigg]
\end{align*}
where the inequality is due to \Cref{lemma:controlchisquare} and $\mathbf{1} \in \mathbb{R}^n$ is a vector of $1$'s.

Let $\epsilon_n = (\mathbf{1}^{\top}V)^2/n^2$, then 
\begin{align*}
    \E_V\Bigg[\exp\Big(2\Delta\kappa^2(e^\alpha-1)^2 \epsilon_n\Big)\Bigg] &= \int_{0}^\infty\mP\Big(\exp(2\Delta\kappa^2(e^\alpha-1)^2\epsilon_n) \geq u\Big) du \\
    & \leq 1+ \int_1^\infty\mP\Bigg(\epsilon_n \geq \frac{\log(u)}{2\kappa^2\Delta(e^\alpha-1)^2}\Bigg) du \\
    &\leq 1+\int_1^\infty 2\exp\Big(- \log(u)\frac{n}{4\kappa^2\Delta(e^\alpha-1)^2}\Big)du \\
    & \leq 1+\frac{2}{\frac{n}{4\kappa^2\Delta(e^\alpha-1)^2}-1},
\end{align*}
where the second inequality is Hoeffding's inequality \cite[][Theorem 2.2.6]{vershynin2018high} and the last inequality holds if 
\[
\frac{n}{4\kappa^2\Delta(e^\alpha-1)^2} > 1
\]
For $\chi^2(Z^T,Z_0^T) \leq 1/8$, it is sufficient to take 
\[
\frac{n}{\kappa^2\Delta(e^\alpha-1)^2} \geq 68, 
\]
which completes the proof. 
\end{proof}

\begin{lemma}\label{lemma:controlchisquare}
With $\Gamma = (\kappa/n) uu^\top$ and $\Lambda = (\kappa/n) vv^\top$, it holds that
\[
\E_{Z^T_0}\Bigg(\frac{m_v(Z_{1:n,1:n}(1:T)) m_u(Z_{1:n,1:n}(1:T))}{m_0(Z_{1:n,1:n}(1:T)) m_0(Z_{1:n,1:n}(1:T))}\Bigg) \leq \prod_{t = 1}^{\Delta}\prod_{1\leq i\leq j\leq n}\exp(2\Gamma_{ij}\Lambda_{ij}(e^\alpha-1)^2). 
\]
   
\end{lemma}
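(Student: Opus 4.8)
The plan is to exploit the sequential structure of the edge $\alpha$-LDP mechanism together with the independence of the raw edges, reducing the claimed bound to a product of single-edge $\chi^2$-type estimates. First I would put the privatised densities into an autoregressive product form: since the raw edges $\{A_{ij}(t)\}$ are mutually independent and, under the interaction order \eqref{eq:orderinteraction}, each $Z_{ij}(t)$ is drawn from a channel $Q_{ij}^{(t)}(\cdot\mid a_{ij}(t),\rightarrow z_{ij}(t))$ that depends on the raw data only through the single bit $a_{ij}(t)$, the density of $Z_v^T$ factorises as $m_v(z)=\prod_{t,\,i\le j} q_{ij}^{(t)}(z_{ij}(t)\mid \rightarrow z_{ij}(t);\,p_{v,ij,t})$, where $q_{ij}^{(t)}(\cdot\mid\rightarrow z;p)=\int Q_{ij}^{(t)}(\cdot\mid a,\rightarrow z)\,d\mathrm{Bern}(p)(a)$ and $p_{v,ij,t}=\rho/2+\Lambda_{ij}\mathbf 1\{t\le\Delta\}$; analogously for $m_u$ (with $\Gamma$) and $m_0$ (with $0$). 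Because $p_{v,ij,t}=p_{0,ij,t}$ for $t>\Delta$, the ratio $m_v m_u/m_0^2$ telescopes to $\prod_{t=1}^{\Delta}\prod_{1\le i\le j\le n} R^v_{ij,t}\,R^u_{ij,t}$ with $R^v_{ij,t}=q_{ij}^{(t)}(z_{ij}(t)\mid\rightarrow z;\rho/2+\Lambda_{ij})/q_{ij}^{(t)}(z_{ij}(t)\mid\rightarrow z;\rho/2)$, so only the first $\Delta$ time points matter.

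Next I would evaluate $\E_{Z_0^T}$ of this product by peeling off factors in reverse privatisation order. Enumerate the triples with $t\le\Delta$ as $\tau_1\prec\cdots\prec\tau_N$ along \eqref{eq:orderinteraction}; under $Z_0^T$ the private data are generated one at a time through the null channels $q_{ij}^{(t)}(\cdot\mid\rightarrow z;\rho/2)$, each partial product $\prod_{k\le K}R^v_{\tau_k}R^u_{\tau_k}$ is nonnegative and measurable with respect to $(z_{\tau_1},\dots,z_{\tau_K})$, and the tower property gives $\E_{Z_0^T}[R^v_{\tau_N}R^u_{\tau_N}\mid z_{\prec\tau_N}]=\int q_v q_u/q_0$, where $q_v,q_u,q_0$ are the privatised laws of the single edge $\tau_N$ under the (arbitrary, but $\alpha$-LDP) channel frozen by the history $z_{\prec\tau_N}$, with raw means $\rho/2+\Lambda_{ij}$, $\rho/2+\Gamma_{ij}$, $\rho/2$. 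If this inner quantity is bounded by a constant $c_{ij,t}$ uniformly over the history, then iterating downward from $\tau_N$ to $\tau_1$ (using nonnegativity of the partial products at each step) yields $\E_{Z_0^T}[m_vm_u/m_0^2]\le\prod_{t\le\Delta,\,i\le j}c_{ij,t}$.

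The crux is the single-edge estimate. Writing $f_0,f_1$ for the channel densities given raw bit $0$ and $1$ with respect to a common dominating measure, one has $q_p=q_0+(p-\rho/2)(f_1-f_0)$, so, using $\int q_0=1$ and $\int(f_1-f_0)=0$,
\[
\int\frac{q_v q_u}{q_0}=1+\Lambda_{ij}\Gamma_{ij}\int\frac{(f_1-f_0)^2}{q_0}.
\]
The edge $\alpha$-LDP constraint \eqref{eq:edgeldp} forces $e^{-\alpha}\le f_0/f_1\le e^{\alpha}$, hence $|f_1-f_0|\le(e^{\alpha}-1)\min(f_0,f_1)\le(e^{\alpha}-1)q_0$, and therefore $\int(f_1-f_0)^2/q_0\le(e^{\alpha}-1)^2$. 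Thus $c_{ij,t}\le 1+\Lambda_{ij}\Gamma_{ij}(e^{\alpha}-1)^2$ when $\Lambda_{ij}\Gamma_{ij}\ge0$ (and $c_{ij,t}\le1$ otherwise), and the stated product bound follows from $1+x\le e^{2x}$ for $x\ge0$; multiplying over $t\le\Delta$ and $1\le i\le j\le n$ gives the claim.

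I expect the main obstacle to be making the telescoping-and-peeling rigorous in the presence of interactivity: one must verify that $m_v$ really does factorise edge-by-edge along the fixed order \eqref{eq:orderinteraction} (a chain-rule factorisation legitimised by the independence of the raw bits and the single-bit dependence of each channel), that the reverse-order conditioning is valid (the partial products are history-measurable and nonnegative, so the tower property and the constant factor-out both apply), and — most importantly — that the per-edge quantity $\int q_vq_u/q_0$ is controlled \emph{uniformly over the conditioning history}, since the mechanism is free to choose the edge-$\tau_N$ channel as a function of that history. This last uniformity is exactly what the $\alpha$-LDP bound $\int(f_1-f_0)^2/q_0\le(e^{\alpha}-1)^2$ delivers, and it is the only place the privacy level $\alpha$ enters the argument.
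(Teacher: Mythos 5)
Your proposal follows essentially the same route as the paper's proof: a chain-rule factorisation of the interactive mechanism along the fixed order \eqref{eq:orderinteraction}, telescoping away the times $t>\Delta$ where the three measures coincide, peeling off edges in reverse privatisation order via the tower property, and a per-edge second-moment computation in which the constant term is $1$, the linear term vanishes, and the quadratic term carries $\Gamma_{ij}\Lambda_{ij}$. The one place you genuinely deviate is the single-edge estimate: you bound $\int (f_1-f_0)^2/q_0$ directly by $(e^{\alpha}-1)^2$ using $|f_1-f_0|\le (e^{\alpha}-1)\min(f_0,f_1)\le (e^{\alpha}-1)q_0$, whereas the paper first lower-bounds the mixture denominator by $q_0(z)/2$ (its Lemma~\ref{lemma: loweredge}), which costs a factor $2$ and requires the extra hypothesis $\alpha\rho\le 1/2$ built into Lemma~\ref{lemma: edgelowerbound}. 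Your version is cleaner and dispenses with that side condition, and since the target exponent already contains the factor $2$, the bound $1+x\le e^{2x}$ absorbs the difference. One shared loose end: for pairs with $\Gamma_{ij}\Lambda_{ij}<0$ your bound $c_{ij,t}\le 1$ does not by itself imply $c_{ij,t}\le \exp(2\Gamma_{ij}\Lambda_{ij}(e^{\alpha}-1)^2)<1$; the paper's own write-up elides the same sign issue, so this is not a defect specific to your argument, but if you want a term-by-term inequality you should either restrict to $u_iu_jv_iv_j\ge 0$ or track the negative terms into the final aggregate bound, where they only help.
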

\begin{proof} 
Recall that the notation $\rightarrow z_{ij}(t)$ denote the collection of private information that can be used to generate $z_{ij}(t)$ according to \eqref{eq:orderinteraction}. Note that 
\[
\frac{m_v(z_{1:n,1:n}(1:T)) m_u(z_{1:n,1:n}(1:T))}{m_0(z_{1:n,1:n}(1:T)) m_0(z_{1:n,1:n}(1:T))} =  \prod_{j = 1}^n \prod_{i = 1}^j \prod_{t = 1}^T  \frac{m_v(z_{ij}(t)|\rightarrow z_{ij}(t)) m_u(z_{ij}(t)|\rightarrow z_{ij}(t))}{m_0(z_{ij}(t)|\rightarrow z_{ij}(t)) m_0(z_{ij}(t)|\rightarrow z_{ij}(t))}. 
\]
For $\Delta < t \leq T$, by construction we have for any $1 \leq i \leq j \leq n$
\begin{align*}
    m_v\bigl(z_{ij}(t)|\rightarrow z_{ij}(t)\bigr) &= m_u(z_{ij}(t)|\rightarrow z_{ij}(t)) = m_0(z_{ij}(t)|\rightarrow z_{ij}(t)) \\
    &= q(z_{ij}(t)|a_{ij}(t) = 1, \rightarrow z_{ij}(t))\rho/2 + (1-\rho/2) q(z_{ij}(t)|a_{ij}(t) = 0, \rightarrow z_{ij}(t)).
\end{align*}
Thus, 
\begin{align}\label{eq:interactivechisquare}
    &\E_{Z^T_0}\Bigg(\frac{m_v(Z_{1:n,1:n}(1:T)) m_u(Z_{1:n,1:n}(1:T))}{m_0(Z_{1:n,1:n}(1:T)) m_0(Z_{1:n,1:n}(1:T))}\Bigg) = \E_{Z^{\Delta}_0}\Bigg(\frac{m_v(Z_{1:n,1:n}(1:\Delta)) m_u(Z_{1:n,1:n}(1:\Delta))}{m_0(Z_{1:n,1:n}(1:\Delta)) m_0(Z_{1:n,1:n}(1:\Delta))}\Bigg) \nonumber \\
    & = \textstyle \E_{Z^\Delta_0}\Bigg( \frac{m_v(\rightarrow Z_{nn}(\Delta)) m_u(\rightarrow Z_{nn}(\Delta))}{m_0(\rightarrow Z_{nn}(\Delta)) m_0(\rightarrow Z_{nn}(\Delta))} \E_{Z^\Delta_0} \left(\frac{m_v(Z_{nn}(\Delta)|\rightarrow Z_{nn}(\Delta)) m_u(Z_{nn}(\Delta)|\rightarrow Z_{nn}(\Delta))}{m_0^2(Z_{nn}(\Delta)|\rightarrow Z_{nn}(\Delta))}\bigg|\rightarrow Z_{nn}(\Delta)\right)\Bigg)
\end{align}

Note that for any $1 \leq i \leq j \leq n$ and $ 1\leq t \leq \Delta$, writing $z$ as the privatised data $z_{ij}(t)$, $q_1(z) = q_{ij}^{(t)}(z|a_{ij}(t)  = 1, \rightarrow z)$ and $q_0(z) = q_{ij}^{(t)}(z|a_{ij}(t)  = 0, \rightarrow z)$, we have 
\begin{align*}
    &\E_{Z^t_0} \left(\frac{m_v(Z_{ij}(t)|\rightarrow Z_{ij}(t)) m_u(Z_{ij}(t)|\rightarrow Z_{ij}(t))}{m_0^2(Z_{ij}(t)|\rightarrow Z_{ij}(t))}\bigg|\rightarrow Z_{ij}(t)\right) \\
    &= \int \frac{\Big[q_1(z)(\rho/2+\Gamma_{ij})+q_0(z)(1-\rho/2-\Gamma_{ij})\Big]\Big[q_1(z)(\rho/2+\Lambda_{ij})+q_0(z)(1-\rho/2-\Lambda_{ij})\Big]}{\rho  q_1(z)+q_0(z)(1-\rho)} dz \nonumber \\
    & =\bigints \frac{\splitfrac{\Big[(q_1(z)-q_0(z))\rho/2+q_0(z)+\Gamma_{ij}(q_1(z) - q_0(z))\Big]\Big[(q_1(z)-q_0(z))\rho/2+q_0(z)+}{\Lambda_{ij}(q_1(z) - q_0(z)\Big]}}{(q_1(z)-q_0(z))\rho/2+q_0(z)} dz \nonumber \\
     &= (I)+(II)+(III)
\end{align*}
where 
\begin{align*}
    (I) &= \int [(q_1(z)-q_0(z))\rho/2+q_0(z)] dz = 1 \\
    (II) & = \int (q_1(z)-q_0(z))(\Gamma_{ij}+\Lambda_{ij}) dz = 0 
\end{align*}
since $q_1(z)$ and $q_0(z)$ are densities of regular conditional probability distributions. Also,
\begin{align*}
    (III) & = \Gamma_{ij}\Lambda_{ij}\int \frac{(q_1(z) - q_0(z))^2}{(q_1(z)-q_0(z))\rho+q_0(z)}dz = \Gamma_{ij}\Lambda_{ij}\int \frac{(q_0(z))^2(q_1(z)/q_0(z)-1)^2}{(q_1(z)-q_0(z))\rho+q_0(z)}dz \\
    & = \Gamma_{ij}\Lambda_{ij}C_\alpha 
\end{align*}
with $0 \leq C_\alpha \leq 2(e^\alpha-1)^2$ where the last equality is due to \Cref{lemma: loweredge} and $q_1(x)/q_0(x) \in [e^{-\alpha},e^\alpha]$. Using the inequality $1+2\Gamma_{ij}\Lambda_{ij}(e^\alpha-1)^2 \leq \exp(2\Gamma_{ij}\Lambda_{ij}(e^\alpha-1)^2)$, we obtain 
\[
\E_{Z^\Delta_0} \left(\frac{m_v(Z_{nn}(\Delta)|\rightarrow Z_{nn}(\Delta)) m_u(Z_{nn}(\Delta)|\rightarrow Z_{nn}(\Delta))}{m_0^2(Z_{nn}(\Delta)|\rightarrow Z_{nn}(\Delta))}\bigg|\rightarrow Z_{nn}(\Delta)\right) \leq \exp(2\Gamma_{nn}\Lambda_{nn}(e^\alpha-1)^2).
\]
 Continue factoring \eqref{eq:interactivechisquare} according to the order of interaction defined in \eqref{eq:orderinteraction} and repeatedly applying the above argument for each entry $Z_{ij}(t)$ yields the claimed result.

\end{proof}

\begin{lemma}\label{lemma: loweredge}
When $\alpha \leq 1$ and $\alpha \rho \leq 1/2$, then
\[
(q_1(z)-q_0(z))\rho/2+q_0(z) \geq q_0(z)/2
\]
\end{lemma}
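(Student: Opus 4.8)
The plan is to reduce the claimed inequality to a pointwise bound on the likelihood ratio $q_1(z)/q_0(z)$, which the edge $\alpha$-LDP constraint \eqref{eq:edgeldp} pins down to the interval $[e^{-\alpha}, e^{\alpha}]$. Rearranging terms, the assertion $(q_1(z)-q_0(z))\rho/2 + q_0(z) \ge q_0(z)/2$ is equivalent to $q_0(z) \ge (q_0(z)-q_1(z))\rho$, so it suffices to verify this inequality for each fixed $z$.

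First I would dispose of the trivial case: if $q_1(z) \ge q_0(z)$, then $(q_0(z)-q_1(z))\rho \le 0 \le q_0(z)$ and there is nothing to prove. In the remaining case $q_1(z) < q_0(z)$, I would invoke the privacy lower bound $q_1(z) \ge e^{-\alpha} q_0(z)$ to obtain $q_0(z) - q_1(z) \le (1 - e^{-\alpha}) q_0(z)$, and hence $(q_0(z)-q_1(z))\rho \le (1-e^{-\alpha})\rho\, q_0(z)$.

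Finally I would close the estimate using the elementary inequality $1 - e^{-\alpha} \le \alpha$ (valid for all $\alpha \ge 0$) together with the hypothesis $\alpha\rho \le 1/2$, which gives $(1-e^{-\alpha})\rho \le \alpha\rho \le 1/2 \le 1$, so that $(q_0(z)-q_1(z))\rho \le q_0(z)$, as required. There is no genuine obstacle here; the only points requiring a little care are that the inequality must hold pointwise in $z$ without assuming $q_0(z) > 0$ (when $q_0(z) = 0$ the ratio bound forces $q_1(z) = 0$ and both sides vanish), and that one could in fact extract the slightly stronger conclusion $(q_1(z)-q_0(z))\rho/2 + q_0(z) \ge 3q_0(z)/4$ from the same argument. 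Note that the hypothesis $\alpha \le 1$ is not actually needed; only $\alpha\rho \le 1/2$ is used.
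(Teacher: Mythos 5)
Your proof is correct and follows essentially the same route as the paper: both reduce to the pointwise likelihood-ratio bound $q_1(z)/q_0(z)\ge e^{-\alpha}$ and then finish with elementary estimates. The only difference is that you use $1-e^{-\alpha}\le\alpha$ directly, whereas the paper passes through $1-e^{-\alpha}\le e^{\alpha}-1\le 2\alpha$; your version is marginally sharper, which is why you can drop the hypothesis $\alpha\le 1$ and obtain the constant $3q_0(z)/4$ in place of $q_0(z)/2$.
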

\begin{proof}[Proof of \Cref{lemma: loweredge}]
Using the facts $q_1(z)/q_0(z) \geq e^{-\alpha}$, and $e^\alpha-1 \geq 1-e^{-\alpha}$, we obtain
\begin{align*}
    (q_1(z)-q_0(z))\rho/2+q_0(z) &\geq q_0(z)\Big(\frac{q_1(z)}{q_0(z)}-1\Big)\rho/2+q_0(z) \geq q_0(z)(1-\rho(1-e^{-\alpha})/2) \\
    & \geq q_0(z)(1-\rho(e^\alpha-1)/2) \geq q_0(z)(1-\alpha \rho) \geq q_0(z)/2,
\end{align*}
where in the last two inequalities we use $\alpha \leq 1$ and $\alpha \rho \leq 1/2$ respectively. 
\end{proof}

\begin{proof}[Proof of \Cref{lemma: nodelowerbound}]
The proof parallels the structure of the proof of \Cref{lemma: edgelowerbound}, so we are somewhat more terse. Let $\kappa^2 = \frac{\sqrt{n_1}n_2}{20(e^\alpha-1)^2\Delta}, v \in \{1,-1\}^{n_1}$, $P_v^T$ be the joint distribution of a collection of independent adjacency matrices $\{A(t)\}_{t=1}^T$ such that for $ t \in \{1,\dotsc,\Delta\}$, 
\[
\mathbb{E}[A_{i1}(t)] = \rho/2+ \frac{\kappa}{\sqrt{n_1n_2}}v_{i}\quad 1\leq i \leq n_1, \quad \text{and} \quad A_{i1}(t) = \dotsc = A_{in_2}(t),
\]
and for $t \in \{\Delta+1,\dotsc,T\}$,
\[
\mathbb{E}[A_{i1}(t)] = \rho/2 \quad \text{and} \quad A_{i1}(t) = \dotsc = A_{in_2}(t).
\]
In words, within each network, the entries of each row are \emph{identical}. In particular, we have for any $ 1\leq i\leq n_1, 1\leq j \leq n_2$,
\begin{align*}
    \mathbb{E}[A_{ij}(t)] = \rho/2+ \frac{\kappa}{\sqrt{n_1n_2}}v_{i}  \quad 1\leq t \leq \Delta, \qquad
     \mathbb{E}[A_{ij}(t)] = \rho/2 \quad \Delta+1\leq t \leq T
\end{align*}
and 
\begin{align*}
    \mP(A_{i}(t) = \mathbf{1}) &= 1-\mP(A_{i}(t) = \mathbf{0}) = \mP(A_{ij}(t) = 1) = \rho/2+ \frac{\kappa}{\sqrt{n_1n_2}}v_{i}, \quad 1\leq t \leq \Delta \\
     \mP(A_{i}(t) = \mathbf{1}) &= 1-\mP(A_{i}(t) = \mathbf{0}) = \mP(A_{ij}(t) = 1) = \rho/2, \quad \Delta+1\leq t \leq T,
\end{align*}
where $A_{i}(t)$ denotes the $i$-th row of the matrix $A(t)$, and $\mathbf{1} \in \mathbb{R}^{n_2}$, $\mathbf{0} \in \mathbb{R}^{n_2}$ denote a vector of 1's and 0's respectively. The distribution of each network at time $t$ is denoted as $P_{v,t} = \prod_{1\leq i \leq n_1}P_v^{it}$. 

Note that $\eta(P_v^T) = \Delta$, $\|A(\Delta)-A(\Delta+1)\|_\mathrm{F}^2 = \kappa^2$, and $\kappa_0^2 = \frac{1}{20\sqrt{n_1}\rho^2\Delta(e^\alpha-1)^2}$. We are constrained by $\kappa/\sqrt{n_1n_2} < \rho/2$, which is equivalent to 
$\kappa_0^2 \leq 1/4$. Therefore, for each $v$, we have $P_v^T \in \mathcal{P}$. Similar to the construction in \Cref{lemma: edgelowerbound}, we let $\tilde{P}_v^T$ be the joint distribution of a collection of independent adjacency matrices $\{A(t)\}_{t=1}^T$ that is symmetric to $P_v^T$ with respect to time point $T/2$ and has $\eta(\tilde{P}_v^T)$ = $T - \Delta$. 

Let $Z^T_v$ and $\tilde{Z}^T_v$ be the corresponding joint private distribution generated via some \emph{node} $\alpha$ LDP mechanism satisfying \eqref{eq:nodeldp}. Write $Z^T = \frac{1}{2^{n_1}}\sum_{v \in \{1,-1\}^{n_1}}Z_v $ and $\tilde{Z}^T = \frac{1}{2^{n_1}}\sum_{v \in \{1,-1\}^{n_1}}\tilde{Z}_v^T$. 

Using the same argument as in the proof of \Cref{lemma: edgelowerbound}, it is sufficient to consider 
\[
\chi^2(Z^T,Z_0^T) + 1 = \frac{1}{4^{n_1}}\sum_{u,v \in \{-1,1\}^{n_1}}  \E_{Z^T_0}\Bigg(\frac{m_v(Z_{1:n_1}(1:T)) m_u(Z_{1:n_1}(1:T))}{m_0(Z_{1:n_1}(1:T)) m_0(Z_{1:n_1}(1:T))}\Bigg),
\]
where $m_0(\cdot) = \int Q(\cdot|a_{1:n_1}(1:T))dP_0^{T}(a_{1:n_1}(1:T))$, with $P_0^{T}$ being the $T$-fold product measure of $P_{v,T}$, is the density of $Z_0^T$. We show that $\chi^2(Z^T,Z_0^T) \leq 1/8$ with our choice $\kappa^2 = \frac{\sqrt{n_1}n_2}{20(e^\alpha-1)^2\Delta}$. 

Using the same argument as in the proof of \Cref{lemma:controlchisquare}, we have 
\[
 \E_{Z^T_0}\Bigg(\frac{m_v(Z_{1:n_1}(1:T)) m_u(Z_{1:n_1}(1:T))}{m_0(Z_{1:n_1}(1:T)) m_0(Z_{1:n_1}(1:T))}\Bigg) =  \E_{Z^{\Delta}_0}\Bigg(\frac{m_v(Z_{1:n_1}(1:\Delta)) m_u(Z_{1:n_1}(1:\Delta))}{m_0(Z_{1:n_1}(1:\Delta)) m_0(Z_{1:n_1}(1:\Delta))}\Bigg). 
\]

For simplicity, we use a generic $z$ to denote the $i$-th row of the private network $z(t)$ and write $q_1(z) = q_{i}^{(t)}(z|A_i(t)  = \mathbf{1}, \rightarrow z)$ and $q_0(z) = q_{it}(z|A_i(t) = \mathbf{0}, \rightarrow z)$, where the notation $\rightarrow z$ contains all the private information that can be used to generate $z$. Following the same calculation as in the proof of \Cref{lemma:controlchisquare}, we have for any $1 \leq i \leq n_1$ and $1\leq t \leq \Delta$, 
\begin{align*}
    \E_{Z^{\Delta}_0}\Bigg(\frac{m_v(Z_{i}(t)|\rightarrow Z_{i}(t)) m_u(Z_{i}(t)|\rightarrow Z_{i}(t))}{m_0(Z_{i}(t)|\rightarrow Z_{i}(t)) m_0(Z_{i}(t)|\rightarrow Z_{i}(t))}&\bigg| \rightarrow Z_{i}(t) \Bigg) \\ &= 1+ \frac{\kappa^2}{n_1n_2} u_iv_i\int \frac{(q_0(z))^2(q_1(z)/q_0(z)-1)^2}{(q_1(z)-q_0(z))\rho+q_0(z)}dz \\
    &\leq 1+ \frac{2\kappa^2}{n_1n_2}u_iv_i\int q_0(z)(q_1(z)/q_0(z)-1)^2dz \\
    &\leq 1+\frac{2\kappa^2}{n_1n_2}u_iv_i(e^\alpha-1)^2 \\
    &\leq \exp\Big(\frac{2\kappa^2}{n_1n_2}u_iv_i(e^\alpha-1)^2\Big)
\end{align*}
where the first inequality is due to \Cref{lemma: loweredge}. Therefore, we have

\[
\chi^2(Z^T,Z_0^T) + 1 \leq  \frac{1}{4^{n_1}}\sum_{u,v \in \{-1,1\}^{n_1}} \prod_{i=1}^{n_1} \prod_{t = 1}^\Delta \exp\Big(\frac{2\kappa^2}{n_1n_2}u_iv_i(e^\alpha-1)^2\Big). 
\]

Next, writing $U \in \mathbb{R}^{n_1}$ as a random vector with independent Rademacher entries and $\epsilon = \mathbf{1}^{\top}U/n_1$, we have 
\begin{align*}
    &\chi^2(Z^T,Z_0^T) + 1 \\
   & \leq \E_U\Bigg[\exp\Big(\frac{2\Delta\kappa^2(e^\alpha-1)^2}{n_1n_2}(\mathbf{1}^{\top}U)\Big)\Bigg] \\
     &= \int_{0}^\infty\mP\Big(\exp(2\Delta\kappa^2(e^\alpha-1)^2n_2^{-1}\epsilon) \geq u\Big) du \\
    & \leq 1+ \int_1^e\mP\Bigg(\epsilon \geq \frac{n_2\log(u)}{2\kappa^2\Delta(e^\alpha-1)^2}\Bigg) du+\int_e^\infty\mP\Bigg(\epsilon \geq \frac{n_2\log(u)}{2\kappa^2\Delta(e^\alpha-1)^2}\Bigg) du \\
    &\leq 1+\int_1^e\exp\Big(- (\log(u))^2\frac{n_1n_2^2}{(2\kappa^2\Delta(e^\alpha-1)^2)^2}\Big)+\int_e^\infty  \exp\Big(- \log(u)\frac{n_1n_2^2}{(2\kappa^2\Delta(e^\alpha-1)^2)^2}\Big)du, 
\end{align*}
where the last inequality is Hoeffding's inequality. Writing $x = \frac{n_1n_2^2}{(2\kappa^2\Delta(e^\alpha-1)^2)^2}$, we have for any $x>1$
\[
\chi^2(Z^T,Z_0^T) \leq \int_1^e\exp(-x\log^2(u)) du - \frac{1}{1-x}
\]
With the choice $x \geq 90$, it holds that 
\[
\chi^2(Z^T,Z_0^T) \leq 0.1+0.012 \leq 0.125.
\]
Therefore, it is sufficient to take 
\[
\frac{\sqrt{n_1}n_2}{\kappa^2\Delta(e^\alpha-1)^2} = 20.
\]
 to ensure $\chi^2(Z^T,Z_0^T) \leq 1/8$, which completes the proof. 

\end{proof}

\section[]{Proof of results in \Cref{sec: consistentlocalisation}}
\begin{proof}[Proof of \Cref{prop: edge upperbound}]
We write $q = (1+e^\alpha)^{-1}$, the corruption probability that $A_{ij}'(t) \neq A_{ij}(t)$ in \eqref{eq-sbc}. 
The proof relies on the observation that if \(X \sim \text{Bernoulli}(\theta)\) then the privatised \(Z\) obtained by \eqref{eq-sbc} is distributed as \(\text{Bernoulli}(q * \theta)\) where \(q * \theta := q(1 - \theta) + (1 - q)\theta = q+(1-2q)\theta\). This implies if \(X\) is the adjacency matrix of an inhomogeneous Bernoulli network and \(Z\) is a corresponding private view generated by \eqref{eq-sbc} with corruption probability \(q\), then \(Z\) is distributed as an inhomogeneous Bernoulli model with parameter matrix \(q * \Theta\), where $(q * \Theta)_{ij} = q* \theta_{ij}$. In addition, the change point structure is preserved after the privatisation but with 
\[
\min_{k=1,\dotsc,K+1}\|q*\Theta(\eta_k) - q*\Theta(\eta_k-1)\|_\mathrm{F} = (1-2q)\|\Theta(\eta_k)-\Theta(\eta_k-1)\|_\mathrm{F}. 
\]
Also, since $q*\theta$ is monotonic increasing in $\theta$ (for $q < 1/2$), we have $\rho':=\|q*\Theta(t)\|_\infty = q+(1-2q)\rho$ for any $t = 1,\dotsc,T$. Lastly, we have $\rho' \geq q \geq (1+e)^{-1}$, where the second inequality holds when $\alpha \leq 1$, and $(1+e)^{-1} \geq \log(n)/n$ for any $n>1$, which guarantees the sparsity assumption in \cite{wang2021optimal} is satisfied for the privatised inhomogeneous Bernoulli network $(A'(1), \dotsc,A'(T))$. 

The result now follows by a direct application of Theorem 1 in \cite{wang2021optimal} but with some different model parameters representing the effects of privatisation, i.e. $(K' = K, \Delta' = \Delta, \kappa_{0}', n' = n, \rho')$, where $\kappa_{0}' = \frac{(1-2q)\rho}{\rho'} \kappa_{0}$. Using the transformed parameters, the Assumption 2 in \cite{wang2021optimal} becomes 
\begin{equation*}
\label{eq:18}
(1-2q) \kappa_{0} \sqrt{\frac{\rho}{1-2q + q/\rho}} \ge C_{\alpha} \sqrt{\frac{1}{n \Delta}} \log^{1+\xi}(T),
\end{equation*}
and the localisation rate in Theorem 1 in \cite{wang2021optimal} becomes 
\begin{equation*}
\label{eq:19}
\epsilon = C_{1} \log(T) \left( \frac{\sqrt{\Delta}}{(1-2q) \kappa_{0} n \rho} + \frac{(1-2q + q/\rho)\sqrt{\log(T)}}{(1-2q)^{2 }\kappa_{0}^{2} n \rho} \right). 
\end{equation*}
Substituting $q = (1+e^\alpha)^{-1}$ yields the claimed result (with different constants). 
\end{proof}

\begin{algorithm}
    \begin{algorithmic}
        \INPUT{a vector $V \in \mathbb{R}^{d}$ with $\|V\|_{\infty} \leq 1$, privacy parameter $\alpha$}
            \State 1. Generate $\widetilde{V}$ with independent coordinates according to
    \[
        \mathbb{P}(\tilde{V}_j = 1|V_j) = 1 - \mathbb{P}(\tilde{V}_j = -1|V_{j}) = \frac{1}{2}+\frac{V_j}{2}.
    \]
    \State 2. Let $T \sim \mathrm{Ber}(e^\alpha/(e^\alpha+1)$) be independent of $\widetilde{V}$.  Generate $Z$ according to 
    \begin{equation*}
        Z \sim \begin{cases}
            \mathrm{Uniform}\Big(z \in \{-B,B\}^d \Big|\langle z,\tilde{V} \rangle \geq 0\Big), & T = 1, \\
            \mathrm{Uniform}\Big(z \in \{-B,B\}^d \Big|\langle z,\tilde{V} \rangle \leq 0\Big), & T = 0,
    \end{cases}
    \end{equation*}
    where 
    \[
        B = C_d \frac{e^\alpha+1}{e^\alpha-1} \quad \text{and} \quad C_d^{-1} =\begin{cases}  \frac{1}{2^{d-1}} \binom{d-1}{(d-1)/2}, & d \quad \text{odd}, \\
        \frac{1}{2^{d-1}+ \frac{1}{2}\binom{d}{d/2}} \binom{d-1}{d/2}, & d \quad \text{even}. 
        \end{cases}
    \]
        \caption{Privacy mechanism for $\ell_\infty$-ball with radius $1$ \citep{duchi2013local,duchi2018minimax}}
        \label{alg: samplingduchi}
        \OUTPUT{The privatised vector $Z$.}
    \end{algorithmic}
\end{algorithm}

\begin{proof}[Proof of \Cref{lemma: covariance}]
We first restate the algorithm in \Cref{alg: samplingduchi}. For simplicity, we write $\Sigma$ for $\Sigma_Z$. We start with proving \eqref{eq: operatornorm}. Given \eqref{eq:cov1} and \eqref{eq:cov2}, we have when $d$ is odd,
\[
\|\Sigma\| = \|B^2I - \E (V)(\E (V))^{\top}\|\leq B^2+\|\E (V)\|_2^2,
\]
and similarly when $d$ is even, we have 
\[
\|\Sigma\| \leq B^2+\|\E (V)\|_2^2+\frac{C_1\sqrt{d}}{\alpha^2}\sqrt{\max_{i,j}\E(V_iV_j)}. 
\]
To see \eqref{eq:cov1}, simply note that 
\[
\mathrm{Var}(Z_k) = \E Z_k^2 - (\E Z_k)^2 = B^2 -(\E V_k)^2
\]
since $\E Z_k = \E V_k$ for any $k = 1,\dotsc,d$. 

To prove \eqref{eq:cov2}, we start with for any $i,j = 1,\dotsc,d$ with $i \neq j$
\begin{equation}\label{eq:covdecomp}
    \mathrm{Cov}(Z_i, Z_j) = \mathbb{E}[\mathrm{Cov}(Z_i,Z_j|V)] + \mathrm{Cov}(\E(Z_i|V),\E(Z_j|V)) = \mathbb{E}[\mathrm{Cov}(Z_i,Z_j|V)] + \mathrm{Cov}(V_i,V_j),
\end{equation}
where we use the unbiased property $\E(Z_i|V) = V_i$ for any $i = 1,\dotsc,d$ \cite[see Appendix I.3 in][]{duchi2018minimax}. Note that
\begin{equation}\label{eq: condicov}
    \mathrm{Cov}(Z_i,Z_j|V = v) = \E(Z_iZ_j|V = v) - v_iv_j = \sum_{\tilde{v} \in \{-1,1\}^d}\E(Z_iZ_j|\tilde{v})\mP(\tilde{v}|v)- v_iv_j 
\end{equation}
and 
\begin{equation}\label{eq:condiprod}
    \E(Z_iZ_j|\tilde{v})=\pi_\alpha \E(Z_iZ_j|\langle z,\tilde{v} \rangle \geq 0)+(1-\pi_\alpha)\E(Z_iZ_j|\langle z,\tilde{v} \rangle \leq 0). 
\end{equation}

Therefore, we need to compute $\E(Z_iZ_j|\langle z,\tilde{v} \rangle \geq 0)$ and $\E(Z_iZ_j|\langle z,\tilde{v} \rangle \leq 0)$. We consider the cases of $d$ being odd and even separately below. 

\paragraph{When $d$ is odd:} we have 
\begin{align*}
    \sum_{z:\langle z,\tilde{v} \rangle \geq 0 } z_iz_j &= \sum_{l = 0}^{(d-1)/2}B^2\tilde{v}_i\tilde{v}_j\left(\binom{d-2}{l}+\binom{d-2}{l-2}-2\binom{d-2}{l-1}\right) \\
    & = B^2\tilde{v}_i\tilde{v}_j\left(\sum_{l=0}^{(d-1)/2}\binom{d-2}{l}+\sum_{l=0}^{(d-5)/2}\binom{d-2}{l}-2\sum_{l=0}^{(d-3)/2}\binom{d-2}{l}\right) \\
    & = B^2\tilde{v}_i\tilde{v}_j \left(\binom{d-2}{(d-1)/2} - \binom{d-2}{(d-3)/2}\right) \\
    & = 0
\end{align*}

Therefore $\E(Z_iZ_j|\langle z,\tilde{v} \rangle \geq 0) = 0$ and by symmetry $\E(Z_iZ_j|\langle z,\tilde{v} \rangle \leq 0) = 0.$ Hence,
$\mathrm{Cov}(Z_i,Z_j) = -\E(V_iV_j) + \mathrm{Cov}(V_i,V_j) = -\E(V_i)\E(V_j)$ when $d$ is odd. 

\paragraph{When $d$ is even:}  We have 
\begin{align*}
    \sum_{z:\langle z,\tilde{v} \rangle \geq 0 } z_iz_j &= \sum_{l = 0}^{d/2}B^2\tilde{v}_i\tilde{v}_j\left(\binom{d-2}{l}+\binom{d-2}{l-2}-2\binom{d-2}{l-1}\right) \\
    & = B^2\tilde{v}_i\tilde{v}_j\left(\sum_{l=0}^{d/2}\binom{d-2}{l}+\sum_{l=0}^{d/2-2}\binom{d-2}{l}-2\sum_{l=0}^{d/2-1}\binom{d-2}{l}\right) \\
    & = B^2\tilde{v}_i\tilde{v}_j \left(\binom{d-2}{d/2} - \binom{d-2}{d/2-1}\right) \\
    & = -\frac{2B^2}{d} \tilde{v}_i\tilde{v}_j \binom{d-2}{d/2-1}
\end{align*}
where the last equality is due to $k\binom{n}{k} = (n-k+1)\binom{n}{k-1}$. Since the set $\{z \in \{-1,1\}^d|\langle z,\tilde{v} \rangle \geq 0\}$ has cardinality $M = 2^{d-1}+\frac{1}{2}\binom{d}{d/2}$, we have 
\[
\E(Z_iZ_j|\langle z,\tilde{v} \rangle \geq 0) = -\frac{2B^2}{dM} \tilde{v}_i\tilde{v}_j \binom{d-2}{d/2-1}. 
\]
When $\langle z,\tilde{v} \rangle \leq 0$, we obtain the \emph{same} result 
\begin{align*}
    \sum_{z:\langle z,\tilde{v} \rangle \leq 0 } z_iz_j &= \sum_{l = d/2}^{d}B^2\tilde{v}_i\tilde{v}_j\left(\binom{d-2}{l}+\binom{d-2}{l-2}-2\binom{d-2}{l-1}\right) \\
    & = B^2\tilde{v}_i\tilde{v}_j\left(\sum_{l=d/2}^{d}\binom{d-2}{l}+\sum_{l=d/2-2}^{d}\binom{d-2}{l}-2\sum_{l=d/2-1}^{d}\binom{d-2}{l}\right) \\
    & = B^2\tilde{v}_i\tilde{v}_j \left(\binom{d-2}{d/2} - \binom{d-2}{d/2-1}\right) \\
    & = -\frac{2B^2}{d} \tilde{v}_i\tilde{v}_j \binom{d-2}{d/2-1}. 
\end{align*}
Then, from \eqref{eq:condiprod} we get 
\[
\E(Z_iZ_j|\tilde{v}) = \E(Z_iZ_j|\langle z,\tilde{v} \rangle \geq 0) = -\frac{2B^2}{dM} \tilde{v}_i\tilde{v}_j \binom{d-2}{d/2-1}.
\]
Now we look at 
\begin{align*}
    \frac{2B^2}{dM}\binom{d-2}{d/2-1} &= \left(\frac{e^\alpha+1}{e^\alpha-1}\right)^2 \left(\frac{2^{d-1}+ \frac{1}{2}\binom{d}{d/2}}{\binom{d-1}{d/2}} \right)^2\frac{\binom{d-2}{d/2-1}}{d(2^{d-1}+ \frac{1}{2}\binom{d}{d/2})} \\
    & = \frac{1}{d}\left(\frac{e^\alpha+1}{e^\alpha-1}\right)^2 \left(\frac{2^{d-1}+ \frac{1}{2}\binom{d}{d/2}}{\frac{d-1}{d/2}\binom{d-2}{d/2-1}}\right) \\
    & = \frac{c_\alpha}{d\alpha^2}\frac{2^{d-1}+c_d2^{d-1}/\sqrt{d}}{\frac{d-1}{d/2} c_{d-2} 2^{d-2}/\sqrt{d-2}} \\
    & = \frac{C_{d,\alpha}}{\sqrt{d}\alpha^2}
\end{align*}
where we use Stirling approximation to obtain $\binom{d}{d/2} = c_d 2^d/\sqrt{d}$ with $ \exp(-1/6)(\sqrt{2\pi})^{-1}<c_d < (\sqrt{2\pi})^{-1}$ and $1/4<c_\alpha < (e+1)^2$. Indeed, using the non-asymptotic inequalities for any even $d \geq 2$
\begin{equation}\label{eq: stirling}
    \sqrt{2\pi d}(d/e)^d \exp\Big(\frac{1}{12d+1}\Big) < d! < \sqrt{2\pi d}(d/e)^d \exp\Big(\frac{1}{12d}\Big)
\end{equation}
we have 
\[
\binom{d}{d/2} = \frac{d!}{((d/2)!)^2} \leq \frac{\exp\Big(\frac{1}{12d} - \frac{2}{6d+1}\Big)}{\sqrt{2\pi}\sqrt{d}2^{-d}} \leq \frac{2^d}{\sqrt{2\pi d}}
\]
and similarly
\[
\binom{d}{d/2} = \frac{d!}{((d/2)!)^2} \geq \frac{\exp\Big(\frac{1}{12d+1} - \frac{1}{3d}\Big)}{\sqrt{2\pi}\sqrt{d}2^{-d}} \geq \frac{\exp(-1/6)2^d}{\sqrt{2\pi d}}.
\]
Therefore, from \eqref{eq: condicov} we have that there exists $C_0 <C_{d,\alpha}< C_1$ with $C_0,C_1$ being absolute constants such that 
\[
\mathrm{Cov}(Z_i,Z_j|V = v) = -\frac{C_{d,\alpha}}{\sqrt{d}\alpha^2} \E(\tilde{V}_i\tilde{V}_j|v ) - v_iv_j = -\left(1+\frac{C_{d,\alpha}}{\sqrt{d}\alpha^2}\right)v_iv_j.
\]
It follows from \eqref{eq:covdecomp} that when $d$ is even 
\[
\mathrm{Cov}(Z_i,Z_j) = -\left(1+\frac{C_{d,\alpha}}{\sqrt{d}\alpha^2}\right)\E(V_iV_j) + \mathrm{Cov}(V_i,V_j) = -\frac{C_{d,\alpha}}{\sqrt{d}\alpha^2}\E(V_iV_j) - \E(V_i)\E(V_j). 
\]
\end{proof}

\begin{proof}[Proof of \Cref{thm: nodeupperbound}] 
First, we set
\[
\kappa^2 := \min_{k = 1,\dotsc,K}\|\Theta(\eta_k)-\Theta(\eta_{k}-1)\|_\mathrm{F}^2 = \kappa_0^2 n_1n_2\rho^2,
\]
to be the unnormalised minimal jump size in Frobenius norm and 
\begin{equation}\label{eq: localisation error}
    \varepsilon = c_5 \log(Tn_1n_2)\Bigg(\frac{\sqrt{\Delta }}{\kappa_0\rho\alpha}\sqrt{\frac{n_2}{n_1}}+\frac{{\log(Tn_1n_2)}}{\rho^2\alpha^2\kappa_0^2}\max\Bigg\{\sqrt{\frac{n_2}{n_1}},\frac{n_2}{n_1}\Bigg\}\Bigg)
\end{equation}
to be the claimed localisation error in \Cref{thm: nodeupperbound}. 

In the proof we use the notation $\kappa^2$ and it translates directly to the signal to noise condition \eqref{eq: node_stn} in terms of $\kappa_0^2$. We also use $U(t)$ and $V(t)$ to denote the privatised matrices obtained by \eqref{eq:duchisampling}, which is consistent with the notations in \Cref{algorithm:MWBS} and results in \Cref{sec: probability bounds}. 

We consider two events. The first event guarantees the quality of the randomly generated intervals. Let $\{\alpha_m\}_{m=1}^M$ and $\{\beta_m\}_{m=1}^M$ be two independent sequences selected uniformly randomly from $\{1,\dotsc,T\}$. 
\[
\mathcal{M} = \bigcap_{k = 1}^K\{\alpha \in S_k, \beta_m \in E_k, \text{for some} \,m \in \{1,\dotsc,M\}\},
\]
where $S_k = [\eta_k - 3\Delta/4, \eta_k-\Delta/2]$ and $E_k = [\eta_k+\Delta/2,\eta_k+3\Delta/4]$, $k = 1,\dotsc,K$. It is shown in \citet[][Lemma 24]{wang2021optimal} that 
\[
\mP(\mathcal{M}) \geq 1-\exp\Bigg(\log(T/\Delta)-\frac{M\Delta^2}{16T^2}\Bigg). 
\]
Next, for $0 \leq s < t < e \leq T$, consider the events
\[
\begin{split}
\mathcal{A}(s,t,e) &= \Bigg\{\Bigg|(\tilde{U}^{(s,e)}(t),\tilde{V}^{(s,e)}(t) ) - \|\tilde{\Theta}^{(s,e)}(t)\|^2_{\mathrm{F}}\Bigg| \\ &\leq C_\beta B\log(Tn_1n_2)\Big(\sqrt{n_2}\|\tilde{\Theta}^{(s,e)}(t)\|_{\mathrm{F}}+B\log(Tn_1n_2)\max\{\sqrt{n_1n_2},n_2\}\Big)\Bigg\}.
\end{split}
\]
Choosing $c,c'$ and $c'' > 3$ in \Cref{lemma: prob_bipartite3}, we have
\[
\mP(\mathcal{A}) = \mP\Bigg(\bigcup_{\substack{1\leq s<t<e\leq T}}\mathcal{A}(s,t,e)\Bigg) \geq 1- (T^{3-c_4}+T^{3-c_5}+2T^{3-c_6}) 
\]
by a union bound. The rest of the proof is conditional on the event $\mathcal{A}\cap\mathcal{M}$ and does not involve further probabilistic arguments. 

Our proof follows the standard induction-like argument for proving consistency of change point estimators \citep{wang2021optimal,fryzlewicz2014wild,wang2018high}. In particular, since the effects of node LDP is fully represented in the probabilistic arguments of analysing event $\mathcal{A}$ (c.f.\ \Cref{lemma: prob_bipartite3} and Lemma 6 in \citet{wang2021optimal}), the rest of the analytic arguments in the proof of Theorem 1 in \citet{wang2021optimal} can be applied to our problem directly. Therefore, we only point out the differences in each step between their proof and ours caused by the different concentration behaviour of $(\tilde{U}^{(s,e)}(t),\tilde{V}^{(s,e)}(t))$. To that end, we consider a generic time interval $(s,e) \subset (0,T)$ that satisfies 
\[
\eta_{r-1} \leq s \leq \eta_r \leq \dotsc \leq \eta_{r+1} \leq e \leq \eta_{r+q+1}, \quad q \geq -1
\]
and 
\[
\max\{\min\{\eta_r-s,s-\eta_{r-1}\}, \min\{\eta_{r+q+1}-e,e-\eta_{r+q}\}\} \leq \varepsilon,
\]
where $q = -1$ means that there is no change point contained in $(s,e)$ and $\varepsilon$ is given in \eqref{eq: localisation error}. A change point $\eta_p$ in $[s,e]$ is referred to as undetected if $\min\{\eta_p-s,\eta_p-e\}\geq 3\Delta/4$. Let $s_m,e_m,a_m,b_m,\tau$ and $m^*$ be defined as in the algorithm. The next four steps parallel the four steps in the proof of Theorem 1 in \cite{wang2021optimal} and establish that our algorithm 
\begin{enumerate}
    \item rejects the existence of undetected change points if $(s,e)$ does not contain any undetected change points
    \item output an estimate $b$ such that $|\eta_p-b|\leq \varepsilon$ if these is at least one undetected change point in $(s,e)$. 
\end{enumerate}

\textbf{Step 1}. Suppose that there do not exist any undetected change points within $(s,e)$. We have with 
\[
\tau > c_1 n_2\alpha^{-2} \log^2(Tn_1n_2)\max\{\sqrt{n_1n_2},n_2\},
\]
the algorithm will always correctly reject the existence of undetected change points. 

\textbf{Step 2}. Suppose that there exists an undetected change point $\eta_p \in (s,e)$. On the event $\mathcal{M}$, there exists an interval $[s_m,e_m]$ such that 
\[
\eta_p-3\Delta/4 \leq s_m \leq \eta_p-\Delta/8 \quad \text{and} \quad \eta_p+\Delta/8\leq e_m \leq \eta_p+3\Delta/4. 
\]
Now, on event $\mathcal{A}$, we have 
\[
\begin{split}
(\tilde{U}^{(s_m,e_m)}(\eta_p),&\tilde{V}^{(s_m,e_m)}(\eta_p) ) \geq \|\tilde{\Theta}^{(s_m,e_m)}(\eta_p)\|^2_{\mathrm{F}}\\
&- C_\beta B\log(Tn_1n_2)\Big(\sqrt{n_2}\|\tilde{\Theta}^{(s,e)}(\eta_p)\|_{\mathrm{F}}+B\log(Tn_1n_2)\max\{\sqrt{n_1n_2},n_2\}\Big)
\end{split}
\]
It follows from \citet[][Lemma 17]{wang2021optimal} that 
\[
\|\tilde{\Theta}^{(s_m,e_m)}(\eta_p)\|_{\mathrm{F}}^2 \geq \kappa^2\Delta/8. 
\]
Then using \eqref{eq: node_stn} we have 
\begin{equation}\label{eq:s2-1}
    \kappa^2\Delta/16 \geq \frac{c_0}{16} \frac{n_2}{\alpha^2}\log^{2+\xi}(Tn_1n_2)\max\{\sqrt{n_1n_2},n_2\} \geq  C_\beta B^2\log^2(Tn_1n_2)\max\{\sqrt{n_1n_2},n_2\}
\end{equation}
if $c_0\log^{\xi}(Tn_1n_2)> 16 C_\beta$. Also,
\begin{multline}\label{eq:s2-2}
       \frac{\|\tilde{\Theta}^{(s_m,e_m)}(\eta_p)\|_{\mathrm{F}}}{2} \geq \frac{\kappa\sqrt{\Delta}}{2\sqrt{2}}  \geq \frac{\sqrt{c_0}}{2\sqrt{2}} B\log^{1+\xi/2}(Tn_1n_2)(\max\{\sqrt{n_1n_2},n_2\})^{1/2} \geq \\
    C_\beta B \sqrt{n_2}\log(Tn_1n_2)
\end{multline}
provided $c_0\log^{\xi}(Tn_1n_2)>8C_\beta^2$. Therefore, we have for $c_0$ large enough, there exists some absolute constant $c_2$ such that 
\[
(\tilde{U}^{(s_m,e_m)}(\eta_p),\tilde{V}^{(s_m,e_m)}(\eta_p) ) \geq c_2\kappa^2\Delta. 
\]
By the definition of $m^*$, we have 
\begin{equation}\label{eq:s2-3}
    (\tilde{U}^{(s_{m*},e_{m*})}(b_{m^*}),\tilde{V}^{(s_{m*},e_{m*})}(b_{m*}) ) \geq c_2\kappa^2\Delta.
\end{equation}
Thus, with $\tau < c_2\kappa^2\Delta$, our algorithm can consistently detect the existence of undetected change points. 

\textbf{Step 3}. Suppose that there exists at least one undetected change point $\eta_p \in (s,e)$. We show that the selected interval $(s_{m^*},e_{m^*})$ indeed contains an undetected change point $\eta_p$. Suppose that 
\begin{equation}\label{eq: s3}
    \max_{s_{m*}<t<e_{m*}}\|\tilde{\Theta}^{(s_{m*},e_{m*})}(t)\|^2_{\mathrm{F}} < c_2\kappa^2\Delta/2
\end{equation}
Then 
\[
\begin{split}
&\max_{s_{m*}<t<e_{m*}}(\tilde{U}^{(s_{m*},e_{m*})}(t),\tilde{V}^{(s_{m*},e_{m*})}(t) ) \\ &\le \max_{s_{m*}<t<e_{m*}}\|\tilde{\Theta}^{(s_{m*},e_{m*})}(t)\|^2_{\mathrm{F}} + C_\beta B\log(Tn_1n_2)\Big(\sqrt{n_2}\max_{s_{m*}<t<e_{m*}}\|\tilde{\Theta}^{(s_{m*},e_{m*})}(t)\|_{\mathrm{F}}\\&+B\log(Tn_1n_2)\max\{\sqrt{n_1n_2},n_2\}\Big) \\
& \leq c_2\kappa^2\Delta/2+\sqrt{c_2/2}\kappa\sqrt{\Delta}C_\beta B\log(Tn_1n_2)\sqrt{n_2}+C_\beta B^2\log^2(Tn_1n_2)\max\{\sqrt{n_1n_2},n_2\} \\
& \leq c_2\kappa^2\Delta
\end{split}
\]
where the first inequality is due to the definition of event $\mathcal{A}$, the second inequality is due to \eqref{eq: s3}, and the last inequality is due to \eqref{eq: node_stn} with sufficiently large $c_0$. This is a contradiction to \eqref{eq:s2-3}, and therefore 
\begin{equation}\label{eq:s3-2}
    \max_{s_{m*}<t<e_{m*}}\|\tilde{\Theta}^{(s_{m*},e_{m*})}(t)\|^2_{\mathrm{F}} > c_2\kappa^2\Delta/2
\end{equation}
Then, we can conclude that $[s_{m*},e_{m*}]$ contains at least one undetected change point using the same argument as that in Step 3 in \cite{wang2021optimal}. 

\textbf{Step 4}. Continue from Step 3, we will show that 
\[
|b_{m*}-\eta_p| \leq \varepsilon,
\]
by applying Lemma 7 in \cite{wang2021optimal}. The conditions of Lemma 7 can be easily checked by letting 
\begin{align*}
    \lambda &= \max_{s_{m*}<t<e_{m*}}|(\tilde{U}^{(s_{m*},e_{m*})}(t),\tilde{V}^{(s_{m*},e_{m*})}(t) )-\|\tilde{\Theta}^{s_{m*},e_{m*}}(t)\|^2_{\mathrm{F}}| \\
    & \leq C_\beta B\log(Tn_1n_2)\Big(\sqrt{n_2}\max_{s_{m*}<t<e_{m*}}\|\tilde{\Theta}^{(s_{m*},e_{m*})}(t)\|_{\mathrm{F}}+B\log(Tn_1n_2)\max\{\sqrt{n_1n_2},n_2\}\Big) \\
    & \leq c_3 \max_{s_{m*}<t<e_{m*}}\|\tilde{\Theta}^{(s_{m*},e_{m*})}(t)\|_{\mathrm{F}}^2
\end{align*}
where the first inequality is due to the definition of event $\mathcal{A}$ and the second inequality is obtained by combining \eqref{eq:s3-2}, \eqref{eq:s2-1} and \eqref{eq:s2-2}. Then their Lemma 7 guarantees that there exists an undetected change point $\eta_p$ within $[s,e]$ with
\[
|\eta_p - b| \leq \frac{C_3\Delta\lambda}{\|\tilde{\Theta}^{s_{m*},e_{m*}}(\eta_p)\|^2_{\mathrm{F}}} \quad \text{and} \quad \|\tilde{\Theta}^{s_{m*},e_{m*}}(\eta_p)\|^2_{\mathrm{F}}\geq c'\max_{s_{m*}<t<e_{m*}}\|\tilde{\Theta}^{(s_{m*},e_{m*})}(t)\|_{\mathrm{F}}^2.
\]
Combing with \eqref{eq:s3-2}, we have 
\begin{align*}
    &|\eta_p-b| \\
    &\leq\frac{C_3\Delta C_\beta B\log(Tn_1n_2)\Big(\sqrt{n_2}\max_{s_{m*}<t<e_{m*}}\|\tilde{\Theta}^{(s_{m*},e_{m*})}(t)\|_{\mathrm{F}}+B\log(Tn_1n_2)\max\{\sqrt{n_1n_2},n_2\}\Big)}{c'\max_{s_{m*}<t<e_{m*}}\|\tilde{\Theta}^{(s_{m*},e_{m*})}(t)\|_{\mathrm{F}}^2}\\
    & = c_7 B\log(Tn_1n_2)\Bigg(\frac{\sqrt{\Delta n_2}}{\kappa}+\frac{B\max\{\sqrt{n_1n_2},n_2\}{\log(Tn_1n_2)}}{\kappa^2}\Bigg) \\ &=  c_8\log(Tn_1n_2)\Bigg(\frac{\sqrt{\Delta }n_2}{\kappa\alpha}+\frac{\max\{\sqrt{n_1n_2},n_2\}n_2{\log(Tn_1n_2)}}{\alpha^2\kappa^2}\Bigg) \\
    &=\varepsilon,
\end{align*}
which completes the proof. 
\end{proof}

\subsection{Probability bounds}\label{sec: probability bounds}
In this section, we derive necessary probability bounds for bipartite node privacy. Recall that $X_i$ denotes the $i$th \emph{row} of some general matrix $X$, $X^{\top}$ denotes the transpose of $X$, and $\|X\|$ denotes the operator norm of $X$. In particular we consider two independent copies $\{X(t)\}_{t=1}^T$ and $\{Y(t)\}_{t=1}^T$ satisfying \Cref{asp:model-bi}. Let $\{U(t)\}_{t=1}^T$ and $\{V(t)\}_{t=1}^T$ be their private versions obtained by applying the sampling mechanism \eqref{eq:duchisampling} to $\{X_i(t)\}_{t = 1}^T$ and $\{Y_i(t)\}_{t = 1}^T$ respectively. Note that 
\[
\E(U(t)) = \E(V(t)) = \E(X(t)) = \E(Y(t))=\Theta(t)
\]
since $\{U(t)\}_{t=1}^T$ and $\{V(t)\}_{t=1}^T$ are also independent copies and the sampling mechanism is unbiased.  We write 
\[
\tilde{U} = \sum_{t= 1}^T w_t U(t), \quad \tilde{V} = \sum_{t = 1}^Tw_t V(t) \quad \text{and} \quad \tilde{\Theta} =  \sum_{t = 1}^Tw_t \Theta(t)  
\]
with 
\[
\sum_{t = 1}^T w_t^2 = 1.
\]
Also we write $\Sigma_i(t)$ as the covariance matrix for $U_i(t)$ and $V_i(t)$ and applying \eqref{eq: operatornorm} with $d=n_2$ yields that when $\alpha<1$ and $n_2$ is odd,
\[
\|\Sigma_i(t)\| \leq B^2+\|\Theta_i(t)\|_2^2\leq B^2+n_2\rho^2\leq 2B^2,
\]
for any $i = 1,\dotsc,n_1$ and $t = 1,\dotsc,T$. Similarly when $\alpha<1$ and $n_2$ is even
\[
\|\Sigma_i(t)\| \leq B^2+n_2\rho^2+\frac{c\sqrt{n_2}}{\alpha^2} \leq 3B^2.
\]
Therefore, we have 
\begin{equation}\label{eq:prob_operatornorm}
    \max_{i = 1,\dotsc,n_1,t = 1,\dotsc,T}\|\Sigma_i(t)\|\leq 3B^2
\end{equation}
for both $n_2$ is odd and even cases. 

\begin{lemma}\label{lemma: prob_bipartite1}
Let $k_i \in \mathbb{R}^{n_2}$ be an arbitrary vector. Then for any $\varepsilon>0$, we have 
\begin{multline*}
    \mP\left(\left|\sum_{t = 1}^Tw_t\sum_{ i = 1}^{n_1} k_i^{\top}(V_i(t) - \Theta_i(t))\right| > \epsilon\right) \\ \leq 2 \exp\left( \frac{-\tfrac{1}{2}\epsilon^2}{ 3B^2 \sum_{i=1}^{n_1}\|k_i\|_2^2 + \max_{\substack{i = 1,\dotsc,n_1}}\|k_i\|_2\sqrt{n_2}B2\epsilon/3} \right). 
\end{multline*}
\end{lemma}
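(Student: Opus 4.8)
The plan is to view the quantity inside the probability as a sum of independent, mean-zero, bounded scalar random variables indexed by the pair $(i,t)$, and then to apply the classical Bernstein inequality. Set $\xi_{i,t} = w_t\, k_i^{\top}\bigl(V_i(t) - \Theta_i(t)\bigr)$ for $1 \le i \le n_1$, $1 \le t \le T$, so that the sum appearing in the statement is $\sum_{i,t}\xi_{i,t}$. I would first check the structural hypotheses: independence across $t$ comes from \Cref{asp:model-bi}; independence across $i$ comes from part~(2) of \Cref{def-bibn} together with the fact that \eqref{eq:duchisampling} privatises each row using its own independent randomness $U_{t,i}$ (the mechanism in this section is non-interactive); and $\E(\xi_{i,t}) = 0$ is the unbiasedness $\E\{V_i(t)\} = \Theta_i(t)$ recorded at the start of \Cref{sec: probability bounds}.

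Next I would establish the two ingredients Bernstein needs. For the variance proxy, $\mathrm{Var}(\xi_{i,t}) = w_t^2\, k_i^{\top}\Sigma_i(t)k_i \le w_t^2\|\Sigma_i(t)\|\,\|k_i\|_2^2 \le 3B^2 w_t^2\|k_i\|_2^2$, the last inequality being \eqref{eq:prob_operatornorm}; summing over $(i,t)$ and using $\sum_{t}w_t^2 = 1$ gives $\sigma^2 := \sum_{i,t}\mathrm{Var}(\xi_{i,t}) \le 3B^2\sum_{i=1}^{n_1}\|k_i\|_2^2$. For the uniform bound, since every entry of $V_i(t)$ lies in $\{-B,B\}$ and $\|\Theta_i(t)\|_\infty \le \rho \le 1 \le B$, we have $\|V_i(t)-\Theta_i(t)\|_2 \le 2B\sqrt{n_2}$, so by Cauchy--Schwarz and $|w_t|\le 1$, $|\xi_{i,t}| \le 2B\sqrt{n_2}\,\|k_i\|_2 \le M$ with $M := 2B\sqrt{n_2}\max_{1\le i\le n_1}\|k_i\|_2$. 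Plugging $\sigma^2$ and $M$ into $\mP\bigl(|\sum_{i,t}\xi_{i,t}| > \epsilon\bigr) \le 2\exp\{-\tfrac12\epsilon^2/(\sigma^2 + M\epsilon/3)\}$ reproduces the stated bound verbatim.

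The only step with genuine content is the variance bound, and the point there is that a crude entrywise estimate $|\mathrm{Cov}(V_{ij}(t),V_{ij'}(t))| \le B^2$ would lose a factor of $n_2$ (it would yield $B^2\|k_i\|_1^2$ rather than $3B^2\|k_i\|_2^2$); using the operator-norm control from \Cref{lemma: covariance} via \eqref{eq:prob_operatornorm} is precisely what keeps the variance term sharp. I do not expect a real obstacle: the remaining work is bookkeeping, and the only thing to state carefully is that the randomness driving \eqref{eq:duchisampling} is drawn afresh for each row, so that $\{\xi_{i,t}\}$ is genuinely an independent family over the product index.
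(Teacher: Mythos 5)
Your proposal is correct and matches the paper's proof essentially verbatim: both treat the terms $w_t k_i^{\top}(V_i(t)-\Theta_i(t))$ as an independent mean-zero family, bound the total variance by $3B^2\sum_{i}\|k_i\|_2^2$ via the operator-norm control $\|\Sigma_i(t)\|\le 3B^2$ from \eqref{eq:prob_operatornorm}, bound each term uniformly by $2\sqrt{n_2}B\max_i\|k_i\|_2$, and invoke Bernstein's inequality. Your remark that the operator-norm bound (rather than an entrywise covariance bound) is the step that keeps the variance term at $\|k_i\|_2^2$ instead of $\|k_i\|_1^2$ is exactly the right observation.
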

\begin{proof}
The proof is due to an application of Bernstein's inequality \cite[][Theorem 2.8.4]{vershynin2018high}. Notice that 
\begin{align*}
    \E\left(\sum_{t = 1}^Tw_t\sum_{ i = 1}^{n_1}k_i^{\top}(V_i(t) - \Theta_i(t))\right)^2 &= \sum_{t = 1}^T w_t^2 \sum_{i = 1}^{n_1}\E[k_i^{\top}(V_i(t) - \Theta_i(t))]^2 \\
    & = \sum_{t = 1}^T w_t^2\sum_{i=1}^{n_1}\E\left[\sum_{j = 1}^{n_2}k_{ij}(V_{ij}(t) - \Theta_{ij}(t))\right]^2 \\
    & = \sum_{t = 1}^T w_t^2\sum_{i=1}^{n_1}k_i^{\top}\Sigma_i(t)k_i \\
    & \leq \sum_{i=1}^{n_1}\|k_i\|_2^2\max_{i,t}\|\Sigma_i(t)\| \\
    & \leq 3B^2 \sum_{i = 1}^{n_1}\|k_i\|_2^2
\end{align*}
where the first line is due to the independence across $t$ and $i = 1, \dotsc, n_1$, the first inequality is due to the definition of operator norm $\|\Sigma_i\|$ and $\sum_{t = 1}^Tw_t^2 = 1$, and in the last line we use \eqref{eq:prob_operatornorm}.  Also since $|k_i^{\top}(V_i(t) - \Theta_i(t))| \leq \|k_i\|_2\|V_i(t) - \Theta_i(t)\|_2 \leq 2\|k_i\|_2\sqrt{n_2}B$ and $w_t \leq 1$, we have 
\[
\mP\left(\left|\sum_{t = 1}^Tw_t\sum_{ i = 1}^{n_1} k_i^{\top}(V_i(t) - \Theta_i(t))\right| > \epsilon\right) \leq 2\exp \left( \frac{-\tfrac{1}{2}\epsilon^2}{ 3B^2 \sum_{i=1}^{n_1}\|k_i\|_2^2 + \max_{\substack{i = 1,\dotsc,n_1}}\|k_i\|_2\sqrt{n_2}B2\epsilon/3} \right)
\]
by Bernstein's inequality, as claimed. 
\end{proof}

\begin{lemma}\label{lemma: prob_bipartite2}
Let $k_i = \sum_{t = 1}^T w_t(V_i(t) - \Theta_i(t))$. Then there exist absolute constants $C, c>0$ such that 
\[
\mP\left(\max_{i = 1,\dotsc,n_1}\|k_i\|_2 \geq C\sqrt{n_2}B\log(Tn_1n_2) \right) \leq T^{-c}. 
\]
\end{lemma}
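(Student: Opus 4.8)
The plan is to reduce the control of $\|k_i\|_2$ to an entrywise bound and to apply a scalar Bernstein inequality to each coordinate. Fix $i \in \{1,\dots,n_1\}$ and $j \in \{1,\dots,n_2\}$, and write $k_{ij} = \sum_{t=1}^T w_t\{V_{ij}(t) - \Theta_{ij}(t)\}$. Since $\{V(t)\}_{t=1}^T$ are independent across $t$ and the privatisation is unbiased, this is a sum of independent, mean-zero scalars. Each summand obeys $|w_t\{V_{ij}(t) - \Theta_{ij}(t)\}| \le B + 1 \le 2B$, using $V_{ij}(t) \in \{-B,B\}$, $\Theta_{ij}(t) \in [0,1] \subseteq [0,B]$, and $|w_t| \le 1$ (which follows from $\sum_t w_t^2 = 1$). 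For the variance, \eqref{eq:cov1} of \Cref{lemma: covariance}, applied with $d = n_2$ to the row $Y_i(t)$ and its privatisation $V_i(t)$, gives $\mathrm{Var}(V_{ij}(t)) = B^2 - \Theta_{ij}(t)^2 \le B^2$ (the coarser bound $3B^2$ from \eqref{eq:prob_operatornorm} would also do), so $\sum_{t=1}^T w_t^2 \mathrm{Var}(V_{ij}(t)) \le B^2$.

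Next I would apply Bernstein's inequality, e.g.\ \cite[Theorem 2.8.4]{vershynin2018high}, to $k_{ij}$ with variance proxy $B^2$ and envelope $2B$: for $s \ge 3B/2$ the bound sits in its sub-exponential regime and yields $\mathbb{P}(|k_{ij}| \ge s) \le 2\exp(-3s/(8B))$. Taking $s = C B\log(Tn_1n_2)$ with $C$ a sufficiently large absolute constant gives $\mathbb{P}(|k_{ij}| \ge C B\log(Tn_1n_2)) \le 2(Tn_1n_2)^{-3C/8}$, and a union bound over the $n_1 n_2$ pairs $(i,j)$ produces $\mathbb{P}\bigl(\max_{i,j}|k_{ij}| \ge C B\log(Tn_1n_2)\bigr) \le 2(Tn_1n_2)^{1-3C/8} \le T^{-c}$ for a suitable absolute constant $c>0$, after choosing $C$ large (say $C \ge 16$) and using $n_1 n_2 \ge 1$. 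On the complement of this event one has, for every $i$, $\|k_i\|_2 \le \sqrt{n_2}\,\|k_i\|_\infty = \sqrt{n_2}\max_{j}|k_{ij}| \le C\sqrt{n_2}\,B\log(Tn_1n_2)$, which is the claimed bound.

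The point worth flagging — and essentially the only thing that needs care — is that the seemingly natural route through an $\epsilon$-net of the sphere $S^{n_2-1}$ (equivalently, invoking \Cref{lemma: prob_bipartite1} with $k_{i'} = \mathbf{1}\{i'=i\}\,u$ for $u$ ranging over the net) does not go through here: a $1/2$-net has cardinality of order $5^{n_2}$, while at the relevant scale $\|k_i\|_2 \asymp \sqrt{n_2}B$ the tail of $u^\top k_i$ is only sub-exponential, so its exponent cannot absorb the net entropy $n_2$ in general. Passing through the individual coordinates sidesteps this obstacle, at the price of a spurious $\log(Tn_1n_2)$ factor relative to the true order $\sqrt{n_2}B$ of $\|k_i\|_2$; this is precisely the extra logarithm that appears in the statement, and it is harmless for the downstream arguments in the proof of \Cref{thm: nodeupperbound}.
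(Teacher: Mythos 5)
Your proof is correct, but it takes a genuinely different route from the paper. The paper applies the matrix Bernstein inequality for rectangular matrices \cite[Exercise 5.4.15]{vershynin2018high} directly to the vector sum $k_i=\sum_t w_t(V_i(t)-\Theta_i(t))$, using the operator-norm bound $\max_{i,t}\|\Sigma_i(t)\|\le 3B^2$ from \eqref{eq:prob_operatornorm} to get $\sigma^2\le 3n_2B^2$ and a dimensional prefactor $2(n_2+1)$, then union-bounds over $i$ and sets $t=C\sqrt{n_2}B\log(Tn_1n_2)$. You instead control each coordinate $k_{ij}$ by scalar Bernstein (envelope $2B$, variance proxy $B^2$ from \eqref{eq:cov1}), union-bound over all $n_1n_2$ coordinates, and pass to $\|k_i\|_2\le\sqrt{n_2}\|k_i\|_\infty$; the arithmetic in your tail bound and the choice $s=CB\log(Tn_1n_2)$ check out, and both arguments land on the same $\sqrt{n_2}\,B\log(Tn_1n_2)$ rate with the same polynomial-in-$T$ failure probability. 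What each buys: yours is entirely elementary and makes transparent where the logarithm comes from (it is the price of the coordinatewise union bound, replacing the paper's $(n_2+1)$ dimensional factor); the paper's is a one-line application once the covariance operator-norm bound of \Cref{lemma: covariance} is in hand, and is the more natural template if one later wants genuinely matrix-valued versions of this bound. Your side remark that the na\"ive $\epsilon$-net over $S^{n_2-1}$ fails is also correct — at the relevant scale $s\asymp\sqrt{n_2}B$ the Bernstein exponent for a fixed direction is only of constant order and cannot absorb the $e^{cn_2}$ net cardinality — and it correctly identifies why either the matrix Bernstein inequality or the coordinatewise route is needed here.
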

\begin{proof}
First note that $\|V_i(t) - \Theta_i(t)\|_2 \leq 2\sqrt{n_2}B$ for any $i = 1,\dotsc,n_1$ and $t = 1,\dotsc,T$. Also, we have 
\[
\E\|V_i(t) - \Theta_i(t)\|_2^2 \leq \E\|V_i(t)\|_2^2 = n_2 B^2 \quad\text{and}\quad\E (V_i(t) - \Theta_i(t))(V_i(t) - \Theta_i(t))^{\top} = \Sigma_i(t),
\]
and $\max_{\substack{i,t}}\|\Sigma_i(t)\| \leq 3B^2$. Next, we apply the matrix Bernstein inequality for rectangular matrices \cite[][Exercise 5.4.15]{vershynin2018high} to $k_i = \sum_{t = 1}^T w_t(V_i(t) - \Theta_i(t))$, for any fixed $i = 1,\dotsc,n_1$, and obtain
\[
\mP(\|k_i\|_2 \geq t)\leq 2(n_2+1)\exp\left(-\frac{t^2/2}{\sigma^2+\sqrt{n_2}Bt/3}\right)
\]
where $$\sigma^2 = \max\left(\sum_{t = 1}^Tw_t^2\E\|V_i(t) - \Theta_i(t)\|_2^2, \|\sum_{t=1}^T w_t^2\Sigma_i(t)\|\right) \leq \max(n_2B^2,3B^2)\leq 3n_2B^2.$$ 
Next, using a union bound we have 
\[
\mP\left(\max_{i = 1,\dotsc,n_1}\|k_i\|_2 \geq t\right) \leq 4n_1n_2\exp\left(-\frac{t^2/2}{3n_2B^2+\sqrt{n_2}Bt/3}\right).
\]
Choosing $t = C\sqrt{n_2}B\log(Tn_1n_2)$ for some absolute constant $C$ large enough in the above leads to 
\[
\mP\left(\max_{i = 1,\dotsc,n_1}\|k_i\|_2 \geq C\sqrt{n_2}B\log(Tn_1n_2) \right) \leq T^{-c}. 
\]

\end{proof}
\begin{lemma}\label{lemma: prob_bipartite3} There exist absolute constants $c,c',c''>0$ such that
\[
\begin{split}
    \mP&\Bigg(\Bigg|\sum_{i = 1}^{n_1}\tilde{U}_i^{\top}\tilde{V}_i - \|\tilde{\Theta}\|^2_{\mathrm{F}}\Bigg| > C_\beta B\log(Tn_1n_2)\Big(\sqrt{n_2}\|\tilde{\Theta}\|_{\mathrm{F}}+B\log(Tn_1n_2)\max\{\sqrt{n_1n_2},n_2\}\Big)\Bigg) \\ &\leq T^{-c}+T^{-c'}+2T^{-c''}.
\end{split}
\]
\end{lemma}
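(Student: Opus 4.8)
The plan is to centre each privatised sum and reduce the whole statement to repeated applications of \Cref{lemma: prob_bipartite1}. Write $\xi_i = \sum_{t=1}^T w_t\bigl(U_i(t) - \Theta_i(t)\bigr)$ and $\zeta_i = \sum_{t=1}^T w_t\bigl(V_i(t) - \Theta_i(t)\bigr)$, so that $\tilde U_i = \tilde\Theta_i + \xi_i$ and $\tilde V_i = \tilde\Theta_i + \zeta_i$ with $\E(\xi_i) = \E(\zeta_i) = 0$. Expanding the inner products and summing over $i$ yields the decomposition
\[
\sum_{i=1}^{n_1}\tilde U_i^\top\tilde V_i - \|\tilde\Theta\|_{\mathrm F}^2 = \underbrace{\sum_{i=1}^{n_1}\tilde\Theta_i^\top\zeta_i}_{\mathrm{(I)}} + \underbrace{\sum_{i=1}^{n_1}\xi_i^\top\tilde\Theta_i}_{\mathrm{(II)}} + \underbrace{\sum_{i=1}^{n_1}\xi_i^\top\zeta_i}_{\mathrm{(III)}}.
\]
Terms $\mathrm{(I)}$ and $\mathrm{(II)}$ are linear in the privatisation noise, whereas $\mathrm{(III)}$ is bilinear, and this last term is the crux of the argument.

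For $\mathrm{(I)}$, observe that $\mathrm{(I)} = \sum_{t=1}^T w_t\sum_{i=1}^{n_1}\tilde\Theta_i^\top\bigl(V_i(t) - \Theta_i(t)\bigr)$ is exactly of the form handled by \Cref{lemma: prob_bipartite1} with the deterministic vectors $k_i = \tilde\Theta_i$. Since $\sum_i\|\tilde\Theta_i\|_2^2 = \|\tilde\Theta\|_{\mathrm F}^2$ and $\max_i\|\tilde\Theta_i\|_2 \le \|\tilde\Theta\|_{\mathrm F}$, choosing the deviation level to be a sufficiently large constant multiple of $B\log(Tn_1n_2)\sqrt{n_2}\,\|\tilde\Theta\|_{\mathrm F}$ makes both the sub-Gaussian and the sub-exponential contributions in the exponent of \Cref{lemma: prob_bipartite1} at least of order $\log(Tn_1n_2)$, so that $\mathbb{P}\bigl(|\mathrm{(I)}| > C_1 B\log(Tn_1n_2)\sqrt{n_2}\,\|\tilde\Theta\|_{\mathrm F}\bigr) \le T^{-c'}$ for suitable constants. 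The bound for $\mathrm{(II)}$ is identical, with the roles of $U$ and $V$ interchanged.

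For $\mathrm{(III)}$ I would condition on the whole sequence $\{U(t)\}_{t=1}^T$. Given this sequence the $\xi_i$ are deterministic, and by independence of $\{U(t)\}$ and $\{V(t)\}$ the conditional law of $\{V(t)\}$ is unchanged, so $\mathrm{(III)} = \sum_{t=1}^T w_t\sum_{i=1}^{n_1}\xi_i^\top\bigl(V_i(t) - \Theta_i(t)\bigr)$ is again of the form in \Cref{lemma: prob_bipartite1}, now with (conditionally) deterministic $k_i = \xi_i$. To control the random scale factors appearing there, let $\mathcal{E} = \{\max_i\|\xi_i\|_2 \le C\sqrt{n_2}B\log(Tn_1n_2)\}$; applying \Cref{lemma: prob_bipartite2} with $U$ in place of $V$ (legitimate since $U(t)$ and $V(t)$ are equidistributed) gives $\mathbb{P}(\mathcal{E}^c) \le T^{-c}$. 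On $\mathcal{E}$ we have the crude bound $\sum_i\|\xi_i\|_2^2 \le n_1\max_i\|\xi_i\|_2^2 \le C^2 n_1 n_2 B^2\log^2(Tn_1n_2)$; plugging these two deterministic bounds into \Cref{lemma: prob_bipartite1} and taking the deviation level to be a large constant multiple of $B^2\log^2(Tn_1n_2)\max\{\sqrt{n_1n_2},n_2\}$ again makes both exponent terms at least of order $\log(Tn_1n_2)$, so the conditional probability of exceeding that level is at most $2T^{-c''}$ on $\mathcal{E}$. Integrating out $\{U(t)\}$ gives $\mathbb{P}\bigl(|\mathrm{(III)}| > C_3 B^2\log^2(Tn_1n_2)\max\{\sqrt{n_1n_2},n_2\}\bigr) \le T^{-c} + 2T^{-c''}$.

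A union bound over $\mathrm{(I)}$, $\mathrm{(II)}$, $\mathrm{(III)}$ then shows that, with $C_\beta = C_1 + C_2 + C_3$, the deviation $\bigl|\sum_i\tilde U_i^\top\tilde V_i - \|\tilde\Theta\|_{\mathrm F}^2\bigr|$ exceeds $C_\beta B\log(Tn_1n_2)\bigl(\sqrt{n_2}\,\|\tilde\Theta\|_{\mathrm F} + B\log(Tn_1n_2)\max\{\sqrt{n_1n_2},n_2\}\bigr)$ with probability at most $T^{-c} + T^{-c'} + 2T^{-c''}$, after enlarging the numerical constants in the deviation levels so as to absorb the leading factors of $2$. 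The main obstacle is the bilinear term $\mathrm{(III)}$: a direct scalar Bernstein bound is not available because $\xi_i^\top\zeta_i$ couples two noise vectors with arbitrary within-$i$ dependence, and it is precisely the conditioning step together with the uniform-over-rows control of $\|\xi_i\|_2$ from \Cref{lemma: prob_bipartite2} that renders it tractable; one must also take care that the conditional variance proxy $\sum_i\|\xi_i\|_2^2$ is itself random and needs to be controlled on the event $\mathcal{E}$ before \Cref{lemma: prob_bipartite1} can be invoked.
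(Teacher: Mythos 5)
Your proposal is correct and follows essentially the same route as the paper's proof: the identical three-term decomposition (the paper labels the bilinear term $I$ and the two linear terms $II$, $III$), Lemma~\ref{lemma: prob_bipartite1} with $k_i=\tilde\Theta_i$ for the linear terms, and conditioning on $\{U(t)\}$ plus Lemma~\ref{lemma: prob_bipartite2} to control $\max_i\|k_i\|_2$ for the bilinear term. The bookkeeping of the probability budget $T^{-c}+T^{-c'}+2T^{-c''}$ also matches.
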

\begin{proof}
Note that $\sum_{i = 1}^{n_1}\tilde{U}_i'\tilde{V}_i - \|\tilde{\Theta}\|^2_{\mathrm{F}} = I+II+III$, where
\[
I = \sum_{i=1}^{n_1}(\tilde{U}_i-\tilde{\Theta}_i)^{\top}(\tilde{V}_i - \tilde{\Theta}_i), \quad II = \sum_{i=1}^{n_1}\tilde{\Theta}_i^{\top}(\tilde{V}_i-\tilde{\Theta}_i), \quad \text{and} \quad III=\sum_{i=1}^{n_1}\tilde{\Theta}_i^{\top}(\tilde{U}_i-\tilde{\Theta}_i). 
\]
It is sufficient to bound $I$ and $II$, since $II$ and $III$ are independent copies of each other. 

We start by bounding $I$ using \Cref{lemma: prob_bipartite1} and \Cref{lemma: prob_bipartite2}. Writing $k_i = \tilde{U}_i-\tilde{\Theta}_i = \sum_{t= 1}^Tw_t(U_i(t) - \Theta_i(t))$, we have from \Cref{lemma: prob_bipartite1} that conditional on $\{U(t)\}_{t= 1}^T$
\begin{align*}
    \mP_{V|U}\left(\left|I\right| > \epsilon\right) & = \mP\left(\left|\sum_{t = 1}^Tw_t\sum_{ i = 1}^{n_1} k_i^{\top}(V_i(t) - \Theta_i(t))\right| > \epsilon\right)\\ & \leq 2\exp \left( \frac{-\tfrac{1}{2}\epsilon^2}{ 3B^2 \sum_{i = 1}^{n_1} \|k_i\|_2^2 + \max_{\substack{i = 1,\dotsc,n_1}}\|k_i\|_2\sqrt{n_2}B2\epsilon/3} \right) \\
    & \leq 2\exp \left( \frac{-\tfrac{1}{2}\epsilon^2}{ 3n_1B^2 \max_{\substack{i = 1,\dotsc,n_1}}\|k_i\|_2^2 + \max_{\substack{i = 1,\dotsc,n_1}}\|k_i\|_2\sqrt{n_2}B2\epsilon/3} \right).
\end{align*}
Now by \Cref{lemma: prob_bipartite2}, we have 
\[
\mP_{U}\left(\max_{i = 1,\dotsc,n_1}\|k_i\|_2 \geq C\sqrt{n_2}B\log(Tn_1n_2) \right) \leq T^{-c}. 
\]
Therefore, for any $\varepsilon>0$, it holds that 
\[
\mP(|I|>\varepsilon) \leq 2\exp \left( \frac{-\tfrac{1}{2}\epsilon^2}{ 3Cn_1n_2B^4 \log^2(Tn_1n_2) + 2Cn_2B^2\log(Tn_1n_2)\epsilon/3} \right) + T^{-c}
\]
and there exists some constant $c',C'$ such that 
\[
\mP\left(|I|>C'B^2\log^2(Tn_1n_2)\max\{\sqrt{n_1n_2},n_2\}\right) \leq T^{-c'}+T^{-c}. 
\]
Now onto term $II$. Applying \Cref{lemma: prob_bipartite1} with $k_i = \tilde{\Theta}_i$ yields 
\[
\mP(|II|> \varepsilon) \leq 2\exp \left( \frac{-\tfrac{1}{2}\epsilon^2}{ 3B^2 \|\tilde{\Theta}\|_{\mathrm{F}} + \max_{\substack{i = 1,\dotsc,n_1}}\|\tilde{\Theta}_i\|_2\sqrt{n_2}B2\epsilon/3} \right).
\]
Therefore there exist absolute constants $c'',C''$ such that 
\[
\mP\Big(|II|> C''\sqrt{n_2}B\|\tilde{\Theta}\|_{\mathrm{F}}\log(T)\Big) \leq T^{-c''},
\]
since $\max_{\substack{i = 1,\dotsc,n_1}}\|\tilde{\Theta}_i\|_2 = \sqrt{\max_{\substack{i = 1,\dotsc,n_1}}\|\tilde{\Theta}_i\|_2^2} \leq \|\tilde{\Theta}_i\|_{\mathrm{F}}$ and the claim follows. 
\end{proof}

\end{document}